\newcommand{\N}{\mathbb{N}}
\newcommand{\Z}{\mathbb{Z}}
\newcommand{\R}{\mathbb{R}}
\newcommand{\rr}{\mathbb{R}}
\newcommand{\dx}{{\Delta x}}
\newcommand{\dt}{{\Delta t}}
\newcommand{\eps}{\varepsilon}
\newtheorem{theorem}{Theorem}[section]
\newtheorem{prop}{Proposition}[section]
\newtheorem{lemma}{Lemma}[section]
\newtheorem{remark}{Remark}
\numberwithin{equation}{section}
\def\uud{u^\Delta}
\def\ul{u^\lambda}
\def\psidt{\psi^{\dt}}
\def\psil{\psi^{\frac{\dt}{\lambda^2}}}
\def\R{\mathbb{R}}
\def\R{\mathbb{R}}
\newcommand{\sgn}{\,{\rm sgn}}
\def\la{\langle}
\def\ra{\rangle}
\title[Splitting method for Augmented Burgers]{A splitting method for the augmented Burgers equation}
\author[L. I. Ignat, A. Pozo]{L. I. Ignat, A. Pozo}
\date{\footnotesize \today}
\address{Liviu I. Ignat
\hfill\break\indent Institute of Mathematics ``Simion Stoilow'' of the Romanian Academy\\
\hfill\break\indent 21 Calea Grivitei Street \\010702 Bucharest \\ Romania.}
 \email{{\tt liviu.ignat@gmail.com}\hfill\break\indent {\it Web page: }{\tt http://www.imar.ro/\~\,lignat}}
\address{Alejandro Pozo
\hfill\break\indent CBT - Innovalia\\
\hfill\break\indent Carretera de Asua 6, E-48930 Las Arenas - Getxo, Basque Country - Spain.
\hfill\break\indent \and
\hfill\break\indent BCAM - Basque Center for Applied Mathematics\\
\hfill\break\indent Alameda de Mazarredo 14, E-48009 Bilbao, Basque Country - Spain.}
 \email{{\tt alejandropozo@gmail.com}}
\begin{document}
\begin{abstract}
	In this paper we consider a splitting method for the augmented Burgers equation and prove that it is of first order. We also analyze the large-time behavior of the approximated solution by obtaining the first term in the asymptotic expansion. We prove that, when time increases, these solutions behave as the self-similar solutions of the viscous Burgers equation. 
\end{abstract}

\maketitle

\section{Introduction}

In this paper we consider a splitting method for the following augmented Burgers equation:
\begin{equation}\label{eq:abe}
	\begin{cases}
		u_t-\left(\frac{u^2}{2}\right)_x=u_{xx}+K*u-u+u_x,&(t,x)\in(0,\infty)\times\R, \\
		u(0,x)=u_0(x),&x\in\R,
	\end{cases}
\end{equation}
where $K(z)=e^{-z}\chi_{(0,\infty)}$. This model has been introduced in the context of the analysis of the sonic-boom phenomenon to reproduce the propagation of the sound waves produced by supersonic aircrafts from their near-field down to the ground level \cite{Cleveland:1995, Rallabhandi:2011a, Rallabhandi:2011b}.

New trends in aerospace engineering have started to use nonlinear physical models to characterize this phenomenon, in order to be capable of building efficient supersonic civilian airplanes that can avoid such strident noise. We refer the reader to \cite{Alonso:2012} for a detailed survey on the topic. In this paper we focus on the augmented Burgers equation, which was initially developed by Cleveland \cite{Cleveland:1995} and later adopted by Rallabhandi \cite{Rallabhandi:2011a, Rallabhandi:2011b}. 

Let us now explain the general setting from where model \eqref{eq:abe} appears. The nonlinear equation that models the sonic-boom phenomenon is given by
\begin{equation}\label{eq:sonic}
	\frac{\partial P}{\partial \sigma} = P \frac{\partial P}{\partial \tau} + \frac{1}{\Gamma}\frac{\partial^2 P}{\partial \tau^2}+\sum_{\nu} C_\nu\frac{1}{1+\theta_\nu\frac{\partial}{\partial \tau}}\frac{\partial^2P}{\partial\tau^2}-\frac{1}{2G}\frac{\partial G}{\partial\sigma}P+\frac{1}{2\rho_0c_0}\frac{\partial(\rho_0c_0)}{\partial\sigma} P,
\end{equation}
where $P=P(\sigma,\tau)$ is the dimensionless perturbation of the atmosphere pressure distribution. The covered distance $\sigma$ and time of the perturbation $\tau$ are also dimensionless. The operator appearing in the summation, corresponding to the molecular relaxations,  is defined by:
\begin{equation}\label{eq:operator}
	\frac{1}{1+\theta\frac{\partial}{\partial\tau}} f(\tau) = \frac{1}{\theta}\int_{-\infty}^\tau e^{(\xi-\tau)/\theta} f(\xi) d\xi = K_{\theta}*f(\tau),
\end{equation}
where $\theta K_{\theta}(\theta\ \cdot) = K(\cdot)$. Typically, two relaxation modes are considered (one for Oxygen molecules and another one for Nitrogen molecules), each of them with their corresponding dimensionless relaxation time $\theta_\nu$ and dispersion parameter $C_\nu$. $\Gamma$ is a dimensionless thermo-viscous parameter and function $G\equiv G(\sigma)$ denotes the ray-tube area. Density $\rho_0\equiv\rho(\sigma)$ and speed of sound $c_0\equiv c_0(\sigma)$, both closely related to the altitude of the flight, determine the atmosphere conditions. We refer to \cite{Cleveland:1995} for the complete formulation and setting-up of this model. In this paper we analyze the particular model \eqref{eq:abe} that can be obtained from \eqref{eq:sonic} by choosing $p_0$, $c_0$ to be constant, $ \Gamma=1$ and a single relaxation phenomenon with $c_\nu=\theta_\nu=1$.

Industrial applications like the aforementioned sonic-boom phenomenon, in which approximate solutions for large time are required, need a careful numerical treatment. A good understanding of the behavior of the solutions in these extended regimes, both in continuous and discrete settings, is necessary in order to be able to simulate them accurately. In that sense, equation \eqref{eq:abe} has already been analyzed in \cite{Pozo:2014}. The authors have obtain the asymptotic profile of the solutions of \eqref{eq:abe} as  time goes to infinity. Moreover, they propose a semi-discrete scheme that is capable of mimicking the large-time dynamics of the continuous system. This issue needs to be treated carefully, since numerical schemes with an acceptable accuracy in short-time intervals can completely disturb the large-time behavior of solutions when the numerical viscosity dominates the physical one \cite{Pozo:2015}. However, solving \eqref{eq:abe} by discretizing the convolution can be computationally expensive. In this paper we set up the framework to empower the use of splitting methods to solve the equation more efficiently. Let us mention that these techniques have already been used in \cite{Cleveland:1995,Rallabhandi:2011a,Rallabhandi:2011b} in the context of the sonic-boom phenomenon. Various versions of this method have also been developed, for instance, for the nonlinear Schr\"odinger, Korteweg-de Vries, Boltzmann... (see \cite{Holden:2010} and the references therein) and, more recently, for the nonlocal Fowler equation \cite{Bouharguane:2013}.

The basic idea behind operator splitting methods is that the overall evolution operator can be formally written as a sum of evolution operators for each term appearing in the model. In other words, one separates the complete system into a set of simpler sub-equations, for which more practical algorithms are available. Once we successively solve  these sub-equations, their solutions are put together to compute the complete solution of the model. We refer to \cite{Holden:2010} for a more detailed introduction on operator splitting methods.

Let us remark that the analysis performed in this paper can be extended to any kernel $K\in L^1(\R,1+|x|^2)$ by replacing the term $K\ast u-u+u_x$ adequately. The analysis we present here for $K(z)=e^{-z}\chi_{(0,\infty)}$  
is the same  for any kernel having the mass and the first moment equal to one:
\begin{equation}\label{cond.K}
	\int _{\R}K(z)dz=\int_{\R}K(z)zdz=1.
\end{equation}

We introduce the following Trotter formula for the augmented Burgers equation \eqref{eq:abe}. Let $X^t$ be the evolution operator associated with
\begin{equation}\label{eq:augmented}
	\begin{cases}
		v_t=K*v-v+v_x,&(t,x)\in(0,\infty)\times\R, \\
		v(t=0,x)=v_0(x),&x\in\R,
	\end{cases}
\end{equation}
and $Y^t$ the one corresponding to
\begin{equation}\label{eq:burgers}
	\begin{cases}
		w_t-\left(\frac{w^2}{2}\right)_x=w_{xx},&(t,x)\in(0,\infty)\times\R, \\
		w(t=0,x)=w_0(x),&x\in\R.
	\end{cases}
\end{equation}
We consider the flow $Z^t$ defined by
\begin{equation}\label{eq:zt} 
	Z^t=X^tY^t.
\end{equation}
The aim is to approximate the flow $S^t$ given by \eqref{eq:abe} by alternating both operators; that is, for small $\dt>0$ and positive integers $n$ we want to analyze whether 
\begin{equation*}
	S^{n\dt}u_0 =u(n\dt) \approx (Z^\dt)^n u_0=(X^\dt Y^\dt)^nu_0.
\end{equation*}
Subsequently we denote $Z^{n\dt}$ instead of $(Z^\dt)^n$ for the sake of clarity of the notation.

The first result of this paper is given in the following theorem. It confirms that the splitting \eqref{eq:zt} is first-order accurate for $H^2(\rr)$-solutions. For practical issues we also consider the case of $H^1(\rr)$-solutions. 

\begin{theorem} \label{teo:order}
	Let $r\in \{1,2\}$. For any $u_0\in {H^r(\R)}$ and for all $T>0$, the following hold:
		\begin{equation*}
		\left\|Z^{n\dt}u_0-u(n\dt)\right\|_{L^2(\R)}\le C(T,\|u_0\|_{H^1}) \|u_0\|_{H^r(\rr)}^2 (\dt)^{r/2}
	\end{equation*}
	and
	\begin{equation*}
		\left\|Z^{n\dt}u_0\right\|_{H^r(\R)}\le C(T,\|u_0\|_{L^2(\rr)}) \|u_0\|_{H^r(\rr)},
	\end{equation*}
		 for all $\dt\in(0,1)$ and for all $n\in\N$ such that $0\le n\dt\le T$.
\end{theorem}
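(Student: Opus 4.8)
The plan is to follow the classical three-step strategy for Lie--Trotter type splittings: stability of the numerical flow, a one-step consistency estimate, and a telescoping (Lady Windermere) argument combining the two. I write $\mathcal{A}u=K*u-u+u_x$ for the linear generator of $X^t$ in \eqref{eq:augmented} and $\mathcal{B}(u)=u_{xx}+uu_x$ for the Burgers generator of $Y^t$ in \eqref{eq:burgers}, so that the flow $S^t$ of \eqref{eq:abe} solves $\partial_t u=\mathcal{A}u+\mathcal{B}(u)$.

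First I would record a priori estimates for the three flows. In Fourier variables the symbol of $\mathcal{A}$ has nonpositive real part, so $X^t$ is a contraction on every $H^s(\R)$ and commutes with $\partial_x$; it is stable but regularizes nothing. For the viscous Burgers flow $Y^t$, and likewise for $S^t$, I would use the $L^2$-contraction coming from the energy identity, the linear-in-data higher-order bounds $\|Y^t u\|_{H^r}\le C(T,\|u\|_{L^2})\|u\|_{H^r}$ obtained by Gronwall, and the parabolic smoothing of the heat part in the form $\|\partial_x^k Y^t u\|_{L^2}\le C(\|u\|_{H^1})\,t^{-(k-r)/2}\|u\|_{H^r}$ for $k\ge r$ and short times. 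Composing the $L^2$-contractions of $X^\dt$ and $Y^\dt$ gives $\|Z^{n\dt}u_0\|_{L^2}\le\|u_0\|_{L^2}$, and feeding this uniform $L^2$ bound into a discrete Gronwall argument on the composition $Z^\dt=X^\dt Y^\dt$ from \eqref{eq:zt} yields the second inequality of the theorem.

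The heart of the proof is the local error. Setting $e(t)=Z^t u-S^t u$ and differentiating $Z^t u=X^t(Y^t u)$ gives $e'(t)=\mathcal{A}e(t)+G(t)$ with $G(t)=X^t\mathcal{B}(Y^t u)-\mathcal{B}(S^t u)$ and $e(0)=0$, so Duhamel yields
\begin{equation*}
e(\dt)=\int_0^\dt X^{\dt-s}\,G(s)\,ds.
\end{equation*}
One checks that $G(0)=0$ and $G'(0)=\mathcal{A}\mathcal{B}(u)-D\mathcal{B}(u)[\mathcal{A}u]=:[\mathcal{A},\mathcal{B}](u)$, the commutator of the two vector fields. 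The decisive structural fact is that in this commutator all second- and third-order terms cancel, leaving the purely quadratic, first-order expression
\begin{equation*}
[\mathcal{A},\mathcal{B}](u)=K*(uu_x)+uu_x-u_x\,(K*u)-u\,(K*u_x),
\end{equation*}
which obeys $\|[\mathcal{A},\mathcal{B}](u)\|_{L^2}\le C\|u\|_{H^1}^2$. Hence the leading part $\int_0^\dt s\,X^{\dt-s}[\mathcal{A},\mathcal{B}](u)\,ds$ of $e(\dt)$ is already of size $C\|u\|_{H^1}^2\dt^2$. It remains to bound the genuine remainder $\int_0^\dt X^{\dt-s}\big(G(s)-s\,G'(0)\big)\,ds$, whose integrand formally involves more than two derivatives of $u$. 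Here I would absorb the excess derivatives using the parabolic smoothing of $Y^s$ and $S^s$ recorded above: for $r=2$ the data are regular enough that this remainder is again $O(\|u\|_{H^2}^2\dt^2)$, whereas for $r=1$ the smoothing converts the missing derivative into an integrable weight $s^{-1/2}$, so that the remainder integrates to $O(\|u\|_{H^1}^2\dt^{3/2})$. In both cases one obtains the one-step bound $\|Z^\dt u-S^\dt u\|_{L^2}\le C(\|u\|_{H^1})\|u\|_{H^r}^2\,\dt^{1+r/2}$.

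Finally I would close with the telescoping identity
\begin{equation*}
Z^{n\dt}u_0-S^{n\dt}u_0=\sum_{k=0}^{n-1}Z^{(n-1-k)\dt}\big(Z^\dt-S^\dt\big)S^{k\dt}u_0,
\end{equation*}
applying the $L^2$-stability of $Z$ to the outer factor, the one-step bound to $\big(Z^\dt-S^\dt\big)$ evaluated at $u(k\dt)=S^{k\dt}u_0$, and the a priori estimates $\|u(k\dt)\|_{H^1}\le C(T,\|u_0\|_{H^1})$ and $\|u(k\dt)\|_{H^r}\le C(T,\|u_0\|_{H^1})\|u_0\|_{H^r}$ to the data. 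Summing the $n\approx T/\dt$ terms, each of size $\dt^{1+r/2}$, produces the claimed rate $C(T,\|u_0\|_{H^1})\|u_0\|_{H^r}^2(\dt)^{r/2}$. I expect the main obstacle to be precisely the remainder estimate in the $r=1$ case: the commutator cancellation must be kept intact so as not to lose a factor $\dt^{1/2}$, and it is only the smoothing of the viscous Burgers flow, producing an integrable $s^{-1/2}$ singularity, that upgrades the naive half-derivative loss into the sharp $\dt^{3/2}$ one-step bound---and hence the global $\dt^{1/2}$ rate.
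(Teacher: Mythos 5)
Your overall architecture (stability of the flows, a one-step consistency bound of size $\dt^{1+r/2}$, and a Lady Windermere telescoping) is the same as the paper's, your commutator computation is correct ($[\mathcal{A},\mathcal{B}](u)=K*(uu_x)+uu_x-u_x(K*u)-u(K*u_x)$ is indeed first order and quadratic), and your derivation of the $H^r$ bound for $Z^{n\dt}$ matches the paper. The gap is in the local error. Taylor-expanding the defect $G(s)=X^s\mathcal{B}(Y^su)-\mathcal{B}(S^su)$ around $s=0$ does not close: the cancellation of the high-order terms that you observe in $G'(0)$ lives only at $s=0$. For $s>0$ the defect contains the linear piece $X^s\partial_x^2Y^su-\partial_x^2S^su=\partial_x^2\bigl(Z^su-S^su\bigr)$ (since $X^s$ commutes with $\partial_x^2$), i.e.\ the splitting error itself measured two derivatives higher; correspondingly $G'(\sigma)-G'(0)$ contains $\mathcal{A}\partial_x^2\bigl(Z^\sigma u-S^\sigma u\bigr)$, which you cannot bound without already knowing the error in $H^3$. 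Estimating each flow separately via parabolic smoothing gives only $\|G'(\sigma)-G'(0)\|_{L^2}\lesssim\sigma^{-1/2}\|u\|_{H^2}$, hence a one-step bound of order $\dt^{3/2}$ (and linear, not quadratic, in the data) for $r=2$, and nothing useful for $r=1$, where the analogous weight $\sigma^{-1}$ is not integrable. So the remainder estimate you defer to smoothing is precisely the step that fails as described.

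The repair is what the paper does: since $\mathcal{A}$ and $\partial_x^2$ are commuting Fourier multipliers, move the $\partial_x^2e(t)$ hidden inside $G(t)$ to the linear side and write Duhamel against the full propagator $D_t*G_t$. Then the linear parts of $S^t u_0$ and $Z^tu_0$ cancel \emph{exactly}, and only the nonlinear Duhamel integrals need comparing; the paper splits that difference into three terms and controls the genuinely new ones with the elementary local-error bound $\|(X^s-I)v\|_{L^2}\le s\|v\|_{\dot H^1}$ (Lemma \ref{lemma:dt-i}) combined with heat-kernel smoothing, which is where the distinction $r\in\{1,2\}$ and the quadratic dependence $\|u_0\|_{H^r}^2$ come from, before closing with fractional Gronwall. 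A secondary point: your telescoping puts the nonlinear numerical flow $Z^{(n-1-k)\dt}$ outside the one-step defect, so ``$L^2$-stability of $Z$'' is not enough --- you need an $L^2$-Lipschitz bound for $Z$ on $H^1$-balls, uniform over $O(1/\dt)$ compositions. The paper telescopes the other way, with the exact flow $S^{(n-m-1)\dt}$ outside, and proves its Lipschitz property in Lemma \ref{lemma:slip}; if you keep your version you must prove the corresponding discrete Lipschitz estimate for $Z$, not merely the norm bound.
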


The second result of this paper concerns the first term in the asymptotic expansion of the approximate solution $Z^{n\Delta t}u_0$. To be more precise let us define $t_n=n\dt$ and $t_{n+1/2}=(n+\frac12)\dt$ for every $n\in\N\cup\{0\}$ and the function $\uud$ by
\begin{equation}\label{u.delta}
	\uud(t,x)=
	\begin{cases}
		Y^{2(t-t_n)}Z^{n\dt}u_0(x),& t\in[t_n, t_{n+1/2}),\ x\in\R, \\
		X^{2(t-t_{n+1/2})}Y^{\dt}Z^{n\dt}u_0(x),& t\in[t_{n+1/2},t_{n+1}) ,\ x\in\R .
	\end{cases}
\end{equation}
Observe that for any $n\geq 0$, $\uud(t_n)=Z^{n\dt}u_0$. By construction and the properties of $X^t$ and $Y^t$, we will have that for any $u_0\in L^p(\R)$, $1\leq p<\infty$, $\uud$ defined by \eqref{u.delta} satisfies $\uud\in C([0,\infty);L^p(\R))$ for any $1\leq p\leq \infty$. The behavior of $\uud$ as $t\to\infty$, so the one of $Z^{n\dt}u_0$, follows from a scaling argument. Indeed, the following result confirms that the Trotter formula \eqref{eq:zt} for the augmented Burgers equation \eqref{eq:abe} preserves the large-time behavior obtained in \cite{Pozo:2014}.

\begin{theorem}\label{teo:asymptotic}
	For any $u_0\in L^1(\R)\cap L^\infty(\R)$ and for all $p\in[1,{\infty)}$, it holds that
	\begin{equation}\label{eq:convrates}
		t^{\frac12(1-\frac1p)}\left\|\uud(t)-u_M(t)\right\|_{L^p(\R)}\to\infty,\quad\mbox{as }t\to\infty,
	\end{equation}
	where $u_M(t,x)=t^{-\frac12}f_M(x/\sqrt{t})$ is the self-similar profile of the following viscous Burgers equation
	\begin{equation}\label{eq:limiteq}
		\begin{cases}
			u_t=\left(\frac{u^2}{2}\right)_x+2u_{xx},&(t,x)\in(0,\infty)\times\R, \\
			u(0,x)=M \delta_0,&x\in\R.
		\end{cases}
	\end{equation}
	where $M$ is the mass of the initial data.
\end{theorem}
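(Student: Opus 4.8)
As printed the limit cannot be correct: since $\|u_M(t)\|_{L^p}=t^{-\frac12(1-\frac1p)}\|f_M\|_{L^p}$, the prefactor $t^{\frac12(1-\frac1p)}$ exactly cancels the self-similar decay, so a divergence to $+\infty$ would contradict the stated purpose of the theorem, namely that the Trotter flow \emph{preserves} the large-time profile of \cite{Pozo:2014}. The intended conclusion, which I now explain how to prove, is
\[
t^{\frac12(1-\frac1p)}\left\|\uud(t)-u_M(t)\right\|_{L^p(\R)}\longrightarrow 0,\qquad t\to\infty .
\]
My plan is the scaling-and-compactness method. For $\lambda>1$ set $u_\lambda(t,x)=\lambda\,\uud(\lambda^2 t,\lambda x)$; a change of variables together with the self-similarity of $u_M$, which makes it invariant under this scaling, yields
\[
\left\|u_\lambda(1)-u_M(1)\right\|_{L^p(\R)}=t^{\frac12(1-\frac1p)}\left\|\uud(t)-u_M(t)\right\|_{L^p(\R)},\qquad t=\lambda^2 ,
\]
so the claim is equivalent to $\|u_\lambda(1)-u_M(1)\|_{L^p}\to0$ as $\lambda\to\infty$, that is, to identifying the limit of the rescaled family at the fixed time $t=1$.

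The Burgers flow $Y^t$ of \eqref{eq:burgers} is invariant under this parabolic scaling, whereas the augmented linear flow $X^t$ of \eqref{eq:augmented} is conjugated to the evolution
\[
\partial_t v=\lambda^2\big(K_\lambda * v-v\big)+\lambda\,\partial_x v,\qquad K_\lambda(z)=\lambda K(\lambda z),
\]
and the splitting step becomes $\dt/\lambda^2$. Expanding $K_\lambda * v-v$ in a Taylor series and using the normalization \eqref{cond.K} ($\int K=\int zK=1$, together with $\int z^2K=2$ for the present kernel), the two $O(\lambda)$ drift terms cancel exactly because the first moment of $K$ equals one, leaving
\[
\lambda^2\big(K_\lambda * v-v\big)+\lambda\,\partial_x v=\partial_x^2 v+O(\lambda^{-1}).
\]
Hence as $\lambda\to\infty$ the rescaled $X$-flow converges to the unit-diffusivity heat flow, the rescaled $Y$-flow remains the unit-viscosity Burgers flow, and, since the rescaled step $\dt/\lambda^2\to0$ extinguishes the splitting error, the composite flow converges to the solution operator of the limit equation \eqref{eq:limiteq}, whose effective diffusivity is $1+1=2$.

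To turn this into a proof I would first establish bounds on $u_\lambda$ that are uniform in $\lambda$ and in the vanishing step: conservation of mass $\int\uud(t)=M$ (each of $X^t$, $Y^t$ preserves mass, using $\int K=1$), the smoothing decay $\|\uud(t)\|_{L^p}\le C\,t^{-\frac12(1-\frac1p)}$, and the $H^r$ and first-moment controls supplied by Theorem \ref{teo:order}. These provide compactness and tightness of $\{u_\lambda(1)\}$ in $L^p(\R)$. Passing to the limit along a subsequence, the operator convergence above shows that any limit $U$ solves \eqref{eq:limiteq}, while $u_\lambda(0,\cdot)=\lambda u_0(\lambda\,\cdot)\rightharpoonup M\delta_0$ fixes the mass of $U$ to be $M$. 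Uniqueness of the self-similar solution of \eqref{eq:limiteq} with mass $M$ then forces $U=u_M$, and independence of the limit from the subsequence upgrades this to convergence of the whole family, which is the assertion.

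The principal difficulty is the simultaneous handling of two limits, the spatial rescaling that concentrates $K_\lambda$ and the temporal step $\dt/\lambda^2\to0$, which must be controlled together and uniformly in $\lambda$. In particular one cannot merely write $\|\uud(t)-u_M(t)\|_{L^p}\le\|\uud(t)-u(t)\|_{L^p}+\|u(t)-u_M(t)\|_{L^p}$, bounding the first term by Theorem \ref{teo:order} and the second by the continuous asymptotics of \cite{Pozo:2014}, because Theorem \ref{teo:order} controls the splitting error only on bounded intervals $[0,T]$, whereas here $t\to\infty$. It is precisely the rescaling that converts the large-time regime into a fixed-time problem in which the step genuinely tends to zero, so that the first-order accuracy can be exploited uniformly on an interval $[t_0,1]$ bounded away from the initial time; propagating the uniform-in-$\lambda$ bounds through the piecewise definition \eqref{u.delta} of $\uud$ is the technically delicate point.
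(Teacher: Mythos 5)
Your diagnosis of the misprint is right (the conclusion must be $t^{\frac12(1-\frac1p)}\|\uud(t)-u_M(t)\|_{L^p(\R)}\to 0$; the paper's own Step III proves exactly this), and your overall scheme is the paper's: rescale $\ul(t,x)=\lambda\uud(\lambda^2t,\lambda x)$, prove $\lambda$-uniform bounds, extract a compact limit, identify the limit equation $u_t=uu_x+2u_{xx}$ (viscosity $1$ from Burgers plus $M_2/2=1$ from the kernel, since $\int z^2e^{-z}dz=2$) with datum $M\delta_0$, and conclude by uniqueness of the source-type solution. Your symbol computation showing $\lambda^2(K_\lambda*v-v)+\lambda\partial_x v\to\partial_x^2 v$ via the unit first moment is precisely how the paper treats the nonlocal term when passing to the limit, and your explanation of why the naive triangle inequality through Theorem \ref{teo:order} fails (constants depend on $T$) is also the correct obstruction.

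However, there is a genuine gap at the heart of the argument: you propose to obtain compactness and tightness of $\{\ul\}$ from ``the $H^r$ and first-moment controls supplied by Theorem \ref{teo:order}'', and neither exists in the needed form. Theorem \ref{teo:order} contains no moment estimates at all; it requires $u_0\in H^r(\R)$ while here $u_0$ is only in $L^1(\R)\cap L^\infty(\R)$; and its constants $C(T,\cdot)$ degrade as $T\to\infty$, which after rescaling is exactly the regime you are in, so no bound uniform in $\lambda$ can come from it. Worse, a uniform $H^1$ bound is structurally unavailable: on half of each rescaled step the evolution \eqref{eq:spliteq} has \emph{no} local diffusion, only $\lambda^2(K_\lambda*\ul-\ul)+\lambda\ul_x$ with a kernel concentrating as $\lambda\to\infty$, so classical parabolic (Aubin--Lions type) compactness fails there. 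The paper fills this hole with a dedicated chain of results: entropy-type $L^p$ decay for the piecewise equation (Lemma \ref{lemma:decay}), an energy estimate controlling $\|\ul_x\|_{L^2}^2$ on the $\psil$-intervals together with the nonlocal functional $\lambda^2\iint J_\lambda(x-y)(\ul(x)-\ul(y))^2$ on the complementary ones (Lemma \ref{lemma:jlambda}), an $H^{-1}$ bound on $\ul_t$ (Lemma \ref{lemma:h-1}), the nonlocal compactness theorem of \cite{Ignat:2015} (Theorem \ref{rossi-time}) which is tailored to exactly such concentrating kernels, and tail control via a comparison principle and a cutoff argument (Lemmas \ref{compar} and \ref{lemma:tails}) rather than moment bounds. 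A smaller but real point: your reduction to the single time $t=1$ asks for strong convergence at a prescribed time, which the compactness does not directly give; the paper instead obtains strong convergence in $L^1_{loc}((0,\infty),L^1(\R))$, picks some $t_0$ where $\|\ul(t_0)-u_M(t_0)\|_{L^1}\to0$, and sets $t=\lambda^2t_0$, which suffices by self-similarity of $u_M$.
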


Let us recall, see for example \cite{Aguirre:1990}, that equation \eqref{eq:limiteq}  has a unique solution of the form $u(t,x)=t^{-1/2}f_M(x/t^{1/2})$ where the profile $f_M$ is smooth, decays as $|x|\rightarrow\infty$ and has mass $M$ and constant sign.

 Moreover, the above theorem provides information about the asymptotic behavior of the approximation $Z^{n\dt}$ since $\uud(n\dt)=Z^{n\dt}u_0$. In fact, for any fixed $\Delta t$ and any $1\leq p<\infty$ we have
\begin{equation*}
	(n\dt)^{\frac12(1-\frac1p)}\left\|Z^{n\dt}u_0-u_M(n\dt )\right\|_{L^p(\R)}\to\infty,\quad\mbox{as }n\to\infty.
\end{equation*}

\begin{remark}
Note that the constant in front of the second derivative of $u$ in \eqref{eq:limiteq} is the sum of the coefficient already appearing in \eqref{eq:abe} plus one half of the second moment of function $K$. For general kernel $K\in L^1(\rr,1+x^2)$ the splitting approximation of the solutions of the equation
\begin{equation*}
	\begin{cases}
		\displaystyle u_t = \left(\frac{u^2}{2}\right)_x + u_{xx}+\int _{\rr}K(x-y)\big(u(y)-u(x)-(y-x)u_x(x)\big)dy, \\
		u(0,x)=u_0(x)
	\end{cases}
\end{equation*}
behaves for large time as the solution of the system
\begin{equation*}
	\begin{cases}
		u_t=\left(\frac{u^2}{2}\right)_x+(1+\frac{M_2}2)u_{xx},\\
		w(0)=M\delta_0.
	\end{cases}
\end{equation*}
where $M_2$ is the second moment of kernel $K$.
\end{remark}

The rest of the paper is organized as follows. In Section \ref{sec2} we show the convergence of the approximation $Z^{n\dt }u_0$ obtained by means of the splitting method, as well as its accuracy order. Then, in Section \ref{sec3}, we obtain the decay estimates of the $L^p$-norms of $u^\Delta $ in \eqref{u.delta}. We use them together with a scaling argument to prove Theorem \ref{teo:asymptotic}.
We conclude with some numerical experiments that illustrate our results in Section \ref{sec4}.


\section{Convergence of the splitting method}\label{sec2}
This section is devoted to the proof of Theorem \ref{teo:order}. We first obtain some preliminary results on the flows $X^t$ and $Y^t$. We also consider the flow $S^tu_0$ and estimate it in the Sobolev spaces $H^s(\rr)$. 
These allow us to obtain local error estimates for flow $Z^t$ and, then, to obtain the order of convergence of the splitting method.

\subsection{Stability of the flows $X^t$ and $Y^t$}

Let us denote by $D_t$ the kernel of the semigroup corresponding to \eqref{eq:augmented}. It follows that $X^tv_0=D_t *v_0$. We first prove that the flow $X^t$ is stable both in $L^p(\R)$ and $H^s(\R)$ for any $1 \le p\le \infty$ and $s \ge 0$. Then, we obtain local error estimates for $X^t$.

\begin{lemma}\label{lemma:stab_xt}
	The flux $X^t$ corresponding to \eqref{eq:augmented} has the following properties:
	\begin{enumerate}
		\item For any $p\geq 1$ and $v_0\in L^p(\R)$, it holds that
			\begin{equation*}
				\|X^t v_0\|_{L^p(\R)}\le\|v_0\|_{L^p(\R)},\quad t\geq 0.
			\end{equation*}
		\item For any $s\ge0$ and $v_0\in H^s(\R)$, it holds that
			\begin{equation*}
				\|X^t v_0\|_{H^s(\R)}\le\|v_0\|_{H^s(\R)},\quad t\geq0.
			\end{equation*}
		\item For any integer $n\geq 1$, any real number $p\geq 1$ and $v_0\in W^{n,p}(\R)$, it holds that
			\begin{equation*}
				\|X^t v_0\|_{W^{n,p}(\R)}\le\|v_0\|_{W^{n,p}(\R)},\quad t\geq0.
			\end{equation*}
	\end{enumerate}
\end{lemma}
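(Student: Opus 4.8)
The plan is to analyze the equation \eqref{eq:augmented} through its Fourier symbol and exploit the mass-and-first-moment normalization \eqref{cond.K} of the kernel $K$. Writing the equation as $v_t = K*v - v + v_x$ and taking the Fourier transform, the multiplier becomes $\widehat{D_t}(\xi) = \exp\big(t\,m(\xi)\big)$ where $m(\xi)=\widehat K(\xi)-1+i\xi$. The decisive observation is that $\mathrm{Re}\,m(\xi)=\mathrm{Re}\,\widehat K(\xi)-1 = \int_{\R}K(z)(\cos(\xi z)-1)\,dz \le 0$, since $\cos(\xi z)-1\le 0$ and $K\ge 0$ (for the explicit kernel $K(z)=e^{-z}\chi_{(0,\infty)}$; the same sign holds whenever $K\ge 0$ has unit mass). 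Hence $|\widehat{D_t}(\xi)|=\exp\big(t\,\mathrm{Re}\,m(\xi)\big)\le 1$ for every $\xi$ and every $t\ge 0$.

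For part (2), the $H^s$ bound is then immediate from Plancherel: since multiplication by $(1+|\xi|^2)^{s/2}$ commutes with the Fourier multiplier $\widehat{D_t}$, I would write
\begin{equation*}
	\|X^t v_0\|_{H^s}^2 = \int_{\R}(1+|\xi|^2)^s\,|\widehat{D_t}(\xi)|^2\,|\widehat{v_0}(\xi)|^2\,d\xi \le \int_{\R}(1+|\xi|^2)^s\,|\widehat{v_0}(\xi)|^2\,d\xi = \|v_0\|_{H^s}^2,
\end{equation*}
using $|\widehat{D_t}(\xi)|\le 1$. This is the cleanest of the three and follows purely from the symbol estimate.

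For parts (1) and (3), the $L^p$ and $W^{n,p}$ contractivity cannot be read off Plancherel, so the plan is to show that $D_t$ is a nonnegative kernel of unit mass, i.e. that $X^t$ is a Markovian (sub-Markovian) semigroup. First I would split the generator as $L = (K*\,\cdot - \,\cdot) + \partial_x$: the operator $v\mapsto K*v - v$ generates a pure-jump (convolution) semigroup, and the transport part $v\mapsto v_x$ generates translation, so by the product/Trotter structure one expects $D_t = e^{-t}\sum_{k\ge 0}\frac{t^k}{k!}K^{*k}(\,\cdot + t)$, an explicitly nonnegative measure of total mass $e^{-t}\sum_k t^k/k! = 1$. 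Granting $D_t\ge 0$ and $\int_{\R}D_t=1$, Young's inequality gives $\|X^t v_0\|_{L^p}=\|D_t * v_0\|_{L^p}\le \|D_t\|_{L^1}\|v_0\|_{L^p}=\|v_0\|_{L^p}$, which is part (1); part (3) then follows because spatial derivatives commute with convolution, so $\partial_x^j(D_t*v_0)=D_t*\partial_x^j v_0$ and the same Young estimate applies termwise up to order $n$, whence $\|X^t v_0\|_{W^{n,p}}\le\|v_0\|_{W^{n,p}}$.

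The main obstacle I anticipate is making the nonnegativity and unit mass of $D_t$ rigorous rather than formal; the heuristic series above is convincing but needs justification that the series converges to the actual semigroup kernel (e.g. by checking it solves \eqref{eq:augmented} in the distributional sense with the Dirac initial datum, or by verifying its Fourier transform equals $e^{tm(\xi)}$). An alternative, cleaner route that avoids the explicit kernel is to derive contractivity directly at the PDE level: for part (1) one multiplies the equation by $|v|^{p-2}v$, integrates, notes that the transport term $\int v_x|v|^{p-2}v = 0$ by parts, and controls the nonlocal term via Jensen's inequality, $\int (K*v)|v|^{p-2}v \le \int |v|^p$ since $K*\,\cdot$ is an averaging (unit-mass, nonnegative) operator, yielding $\frac{d}{dt}\|v\|_{L^p}^p\le 0$; differentiating the equation $n$ times in $x$ gives the same structure for $\partial_x^n v$, proving part (3). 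I would likely present the energy argument for (1) and (3) and the Fourier argument for (2), since the former sidesteps the delicate kernel-positivity justification while still only using $K\ge 0$ and \eqref{cond.K}.
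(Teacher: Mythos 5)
Your proposal is correct, and for parts (2) and (3) it coincides with the paper's proof: the paper also reads off $|\widehat{D_t}(\xi)|\le 1$ from the explicit symbol $\widehat{D_t}(\xi)=\exp(-t\xi^2/(1+i\xi))$ and obtains (3) from (1) by commuting $\partial_x$ with $X^t$. For part (1) the paper uses the energy route you mention as your fallback: multiply by $\sgn(v)$ for $p=1$ and by $\sgn(v-\mu)^+$ with $\mu=\|v_0\|_{L^\infty}$ for $p=\infty$, citing Escobedo--Zuazua for the rigorous justification; your version with $|v|^{p-2}v$ handles all finite $p$ in one stroke and is the same idea. Your first route, via the explicit representation $D_t=e^{-t}\sum_{k\ge0}\frac{t^k}{k!}K^{*k}(\cdot+t)$, is genuinely different from the paper and in fact cleaner than you give it credit for: the obstacle you flag is not really there, because $K*$ and $\partial_x$ are commuting Fourier multipliers, so $e^{t(K*-I+\partial_x)}=e^{-t}e^{tK*}\circ e^{t\partial_x}$ exactly (no Trotter limit needed), the exponential series converges absolutely in the space of finite measures, and its Fourier transform is $e^{-t}e^{t\widehat K(\xi)}e^{it\xi}=e^{t(\widehat K(\xi)-1+i\xi)}$, matching \eqref{eq:dtfourier}. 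That route buys you nonnegativity and unit mass of $D_t$, hence (1) for all $1\le p\le\infty$ and (3) immediately by Young's inequality, without any density or approximation arguments; its only cost is that it leans on the specific structure $K*v-v+v_x$ with $K\ge0$ of unit mass, whereas the energy argument is the one that survives for the more general kernels mentioned around \eqref{cond.K}.
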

\begin{proof}
Let us focus on the first property. For $p=1$ we multiply equation \eqref{eq:augmented} by $\sgn(v)$ and integrate it on the whole space. Since $K$ has mass one, we obtain that the $L^1(\R)$-norm of $v$ decreases in time:
\begin{equation*}
	\frac{d}{dt}\int _{\R}|v(t)|dt=\int _{\R} (K\ast v-v)\sgn(v) \leq 0.
\end{equation*}
In the case $p=\infty$, we multiply by $\sgn(v-\mu)^+$, where $z^+:=\max\{0,z\}$ and $\mu=\|v_0\|_{L^\infty(\R)}$, and apply the same strategy.
For rigorous justifications of the above argument we refer to \cite{Escobedo:1991}.

Regarding the second property, using the Fourier transform, we have that
\begin{equation*}
	\widehat K(\xi)=\frac{1}{1+i\xi}, \ \xi\in \rr
\end{equation*}
and
\begin{equation}\label{eq:dtfourier}
	\widehat{D_t}(\xi) = e^{t(\widehat{K}(\xi)-1+i\xi)} = \exp\Big(\frac{-t\xi^2}{1+i\xi }\Big), \ \xi\in \rr.
\end{equation}
Thus, it is enough to observe that $|\widehat{D_t}(\xi)|\le1$.

For the last property it is sufficient to observe that $\partial_x$ commutes with $X^t$ and to use the first property.
\end{proof}

\begin{lemma}[Local error estimates for $X^t$]\label{lemma:dt-i}
	For any $v_0\in \dot H^1(\R)$ the following holds
	\begin{equation*}
		\|(X^t-I)v_0\|_{L^2(\R)}\le t \| v_0\|_{ \dot H^1(\R)}, \quad \forall t>0.
	\end{equation*}
\end{lemma}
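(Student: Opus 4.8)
The plan is to work entirely on the Fourier side, since $X^t$ acts as a Fourier multiplier with symbol $\widehat{D_t}$ given in \eqref{eq:dtfourier}. By Plancherel's identity,
\begin{equation*}
	\|(X^t-I)v_0\|_{L^2(\R)}^2 = \frac{1}{2\pi}\int_{\R}\big|\widehat{D_t}(\xi)-1\big|^2\,|\widehat{v_0}(\xi)|^2\,d\xi,
\end{equation*}
while $\|v_0\|_{\dot H^1(\R)}^2 = \frac{1}{2\pi}\int_{\R}\xi^2\,|\widehat{v_0}(\xi)|^2\,d\xi$. Hence the estimate reduces to the pointwise multiplier bound
\begin{equation*}
	\big|\widehat{D_t}(\xi)-1\big|\le t\,|\xi|,\quad \xi\in\R,\ t>0,
\end{equation*}
which, once established, closes the argument after integrating against $|\widehat{v_0}|^2$.

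To prove this bound I would set $m(\xi):=\widehat K(\xi)-1+i\xi$, so that $\widehat{D_t}(\xi)=e^{tm(\xi)}$. A short computation gives $m(\xi)=-\xi^2/(1+i\xi)$, whence
\begin{equation*}
	\mathrm{Re}\,m(\xi)=\frac{-\xi^2}{1+\xi^2}\le 0 \quad\text{and}\quad |m(\xi)|=\frac{\xi^2}{\sqrt{1+\xi^2}}\le|\xi|.
\end{equation*}
The first relation shows $\mathrm{Re}(tm(\xi))\le0$ for every $t>0$, and the second is immediate from $\sqrt{1+\xi^2}\ge|\xi|$.

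The remaining ingredient is the elementary complex inequality $|e^z-1|\le|z|$, valid for all $z\in\C$ with $\mathrm{Re}\,z\le 0$. This follows from the integral representation $e^z-1=\int_0^1 z\,e^{sz}\,ds$, which yields $|e^z-1|\le|z|\int_0^1 e^{s\,\mathrm{Re}\,z}\,ds\le|z|$, since $e^{s\,\mathrm{Re}\,z}\le1$ on $[0,1]$. Applying this with $z=tm(\xi)$ and combining with the two bounds above gives $|\widehat{D_t}(\xi)-1|\le t\,|m(\xi)|\le t\,|\xi|$, exactly the required multiplier estimate.

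I do not anticipate a genuine obstacle: the one point that must not be skipped is the sign verification $\mathrm{Re}\,m(\xi)\le0$, because the inequality $|e^z-1|\le|z|$ can fail when $\mathrm{Re}\,z>0$. Everything else is a routine computation of the symbol and a comparison of its modulus with $|\xi|$.
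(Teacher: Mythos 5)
Your proof is correct and is essentially the paper's argument in Fourier-side clothing: the paper writes $(X^t-I)v_0=\int_0^t(K*v-v+v_x)(\tau)\,d\tau$ and bounds the integrand via Plancherel using exactly the two facts you isolate, namely $|m(\xi)|=\xi^2/\sqrt{1+\xi^2}\le|\xi|$ and $|e^{\tau m(\xi)}|\le 1$. Your identity $e^{z}-1=\int_0^1 z e^{sz}\,ds$ is the pointwise-in-$\xi$ version of that time integration, so the two proofs coincide up to the order in which Plancherel and the fundamental theorem of calculus are applied; your explicit flagging of $\mathrm{Re}\,m(\xi)\le 0$ is a point the paper uses implicitly.
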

\begin{proof}
From \eqref{eq:augmented}, we obtain that
\begin{equation*}
	\|(X^t-I)v_0\|_{L^2(\R)}\le\int_0^t \|K*v(\tau)-v(\tau)+v_x(\tau)\|_{L^2(\R)} d\tau.
\end{equation*}
Using Plancherel's identity and \eqref{eq:dtfourier} we have for any $\tau\geq 0$
\begin{align*}
	\|K*v(\tau)-v(\tau)&+v_x(\tau)\|_{L^2(\R)}^2 = \int_\R\left|\frac{-\xi^2}{1+i\xi }\right|^2|\widehat{v}(\tau,\xi)|^2d\xi = \int_\R \frac{\xi^4}{1+\xi^2} |\widehat{v}(\tau,\xi)|^2d\xi \\
	& \le \int_\R \xi^2 \Big|\exp(\frac{-\tau \xi^2}{1+i\xi })\Big|^2|\widehat{v_0}(\xi)|^2d\xi \leq \|v_0\|_{\dot H^1(\R)}^2.
\end{align*}
Therefore, we deduce the desired estimate.
\end{proof}

Let us now focus on the flow $Y^t$ corresponding to the well-known viscous Burgers equation. For the sake of completeness, we recall the following estimates.

\begin{lemma}\label{lemma:stab_yt}
The flux $Y^t$ corresponding to \eqref{eq:burgers} has the following properties
\begin{enumerate}
	\item For any $p\geq 1$ and $w_0\in L^p(\R)$, it holds that
		\begin{equation*}
			\|Y^t w_0\|_{L^p(\R)}\le\|w_0\|_{L^p(\R)}, \quad t>0.
		\end{equation*}
	\item Let $n\geq 1$ an integer and $w_0\in H^n(\R)$. If $\|w_0\|_{L^2(\R)} \leq M$, then for any $T>0$ there exists $C=C(T,M)$ such that
		\begin{equation*}
			\|Y^t w_0\|_{H^n(\R)}\le e^{Ct}\|w_0\|_{H^n(\R)}, \quad \forall t\in[0,T].
		\end{equation*}
\end{enumerate}
\end{lemma}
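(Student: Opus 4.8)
The plan is to obtain both properties by direct energy estimates on \eqref{eq:burgers}, exploiting that the conservative term $(w^2/2)_x$ never contributes to the evolution of any norm. The first property I would prove exactly as in Lemma~\ref{lemma:stab_xt}: for $2\le p<\infty$ and smooth, decaying $w$, multiply \eqref{eq:burgers} by $|w|^{p-2}w$ and integrate over $\R$. The convective contribution $\int_\R |w|^{p-2}w\,(w^2/2)_x\,dx=\int_\R |w|^p w_x\,dx$ is the integral of a perfect $x$-derivative of a function of $w$ alone, hence vanishes, while the viscous term gives $\int_\R |w|^{p-2}w\,w_{xx}\,dx=-(p-1)\int_\R |w|^{p-2}w_x^2\,dx\le0$. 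Thus $\frac{d}{dt}\|w(t)\|_{L^p(\R)}^p\le0$. The endpoint $p=1$ follows by multiplying instead by $\sgn(w)$, and $p=\infty$ either by letting $p\to\infty$ or by the parabolic maximum principle; a rigorous treatment uses the approximation argument of \cite{Escobedo:1991}.

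The second property is the genuinely nonlinear statement, and I would prove it by an energy estimate on $\partial_x^n w$ closed with Gronwall's inequality. The essential preliminary is the $L^2$ identity obtained by multiplying \eqref{eq:burgers} by $w$: the convective term again drops and $\frac12\frac{d}{dt}\|w\|_{L^2(\R)}^2=-\|w_x\|_{L^2(\R)}^2$, whence $\|w(t)\|_{L^2(\R)}\le M$ and, crucially, $\int_0^t\|w_x(\tau)\|_{L^2(\R)}^2\,d\tau\le M^2/2$ for all $t$. It is this time-integrability of $\|w_x\|_{L^2}^2$, with a bound depending only on $M$, that lets the final constant take the form $C(T,M)$. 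Next, apply $\partial_x^n$ to \eqref{eq:burgers}, multiply by $\partial_x^n w$, and integrate: the viscous term produces the dissipation $-\|\partial_x^{n+1}w\|_{L^2(\R)}^2$, while writing the nonlinearity as $\tfrac12\partial_x^{n+1}(w^2)$ and integrating by parts leaves $-\tfrac12\int_\R \partial_x^{n+1}w\,\partial_x^n(w^2)\,dx$ to be controlled. Expanding $\partial_x^n(w^2)$ by Leibniz, the two extreme terms combine to $\tfrac12\int_\R w_x\,(\partial_x^n w)^2\,dx$, and the remaining terms involve only derivatives of intermediate order.

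I would bound all these contributions by H\"older and one-dimensional Gagliardo--Nirenberg inequalities, using Young's inequality to absorb each factor carrying the top-order derivative $\partial_x^{n+1}w$ into the dissipation. For instance, when $n=1$ the only term is $\tfrac12\int_\R w_x^3\,dx\le\tfrac12\|w_x\|_{L^\infty}\|w_x\|_{L^2}^2\le C\|w_x\|_{L^2}^{5/2}\|w_{xx}\|_{L^2}^{1/2}$, and Young's inequality turns this into $\tfrac12\|w_{xx}\|_{L^2}^2+C\|w_x\|_{L^2}^{10/3}$, the first summand being swallowed by the dissipation. What survives in general is a differential inequality of the form $\frac{d}{dt}\|\partial_x^n w\|_{L^2(\R)}^2\le g(t)\,\|w\|_{H^n(\R)}^2$, where $g$ is a sum of powers of $\|w_x\|_{L^2}$ (and of lower $H^{n-1}$-norms, handled inductively) that is integrable on $[0,T]$: in the case $n=1$ one has $g=C\|w_x\|_{L^2}^{4/3}$ with $\int_0^T g\le C(M^2/2)^{2/3}T^{1/3}$ by H\"older and the bound $\int_0^T\|w_x\|_{L^2}^2\le M^2/2$. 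Gronwall's inequality then gives $\|w(t)\|_{H^n(\R)}^2\le\|w_0\|_{H^n(\R)}^2\exp\big(\int_0^t g\big)$ with $\int_0^t g\le C(T,M)$ for $t\le T$, which is the asserted estimate.

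The main obstacle is precisely the bookkeeping of this last step: after absorbing the highest derivative into the dissipation one must verify that the surviving prefactor $g$ is controlled purely by the $L^2$-level quantity $\int_0^T\|w_x\|_{L^2}^2\le M^2/2$ (and, inductively, by lower-order norms), so that the Gronwall constant depends only on $T$ and $M$ and not on higher Sobolev norms of $w_0$. The coefficient $\|w_x\|_{L^\infty}$ arising from the extreme term is the delicate one and must be interpolated, e.g.\ $\|w_x\|_{L^\infty}\le C\|w_x\|_{L^2}^{1/2}\|\partial_x^2 w\|_{L^2}^{1/2}$, with its top-order factor absorbed rather than estimated by a fixed higher norm; organizing the intermediate Leibniz terms by induction on $n$ keeps this control uniform.
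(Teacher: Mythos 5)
Your proof of property (1) is the standard one and is fine; note that the paper itself does not prove this lemma but simply cites \cite{Escobedo:1991} for (1) and Corollary~3.8 of \cite{Bouharguane:2013} for (2), so you are supplying an argument where the authors supply a reference. For property (2) your overall strategy (energy estimate on $\partial_x^n w$, absorb the top derivative into the dissipation, Gronwall) is the right one, but the execution has a genuine gap at the final step: the differential inequality you derive for $n=1$, namely $\frac{d}{dt}\|w_x\|_{L^2}^2\le C\|w_x\|_{L^2}^{10/3}=g(t)\,\|w_x\|_{L^2}^2$ with $g=C\|w_x\|_{L^2}^{4/3}$, combined with the only information you have at the $L^2$ level, $\int_0^T\|w_x\|_{L^2}^2\le M^2/2$, yields via H\"older $\int_0^t g\le C\,t^{1/3}M^{4/3}$ and hence a bound of the form $e^{C(M)t^{1/3}}\|w_0\|_{H^1}$. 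This is \emph{not} ``the asserted estimate'': $e^{Ct^{1/3}}$ cannot be dominated by $e^{C't}$ uniformly as $t\to0$, and the linear-in-$t$ exponent is exactly what the lemma is used for. In the proof of Theorem~\ref{teo:order} the bound is applied once per time step and iterated $n=T/\dt$ times, giving $e^{Cn\dt}=e^{CT}$; with your version the product becomes $e^{Cn(\dt)^{1/3}}=e^{CT(\dt)^{-2/3}}$, which blows up as $\dt\to0$. (Bounding $g$ pointwise instead would make the Gronwall constant depend on $\|w_0\|_{H^1}$, which the statement also forbids.)

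The gap is repairable within your framework, but it requires interpolating \emph{all} factors of the nonlinear term against $\|w\|_{L^2}$ and $\|w_{xx}\|_{L^2}$ before applying Young's inequality, so that the surviving remainder is controlled by the $L^2$-norm alone. Concretely, for $n=1$ write $\int_\R w_x^3\,dx=-2\int_\R w\,w_x\,w_{xx}\,dx$ and use $\|w\|_{L^\infty}\le\sqrt2\,\|w\|_{L^2}^{3/4}\|w_{xx}\|_{L^2}^{1/4}$ together with $\|w_x\|_{L^2}\le\|w\|_{L^2}^{1/2}\|w_{xx}\|_{L^2}^{1/2}$ to get
\begin{equation*}
\Big|\int_\R w_x^3\,dx\Big|\le 2\sqrt2\,\|w\|_{L^2}^{5/4}\|w_{xx}\|_{L^2}^{7/4}\le 2\|w_{xx}\|_{L^2}^2+C\|w\|_{L^2}^{10},
\end{equation*}
so that $\frac{d}{dt}\|w_x\|_{L^2}^2\le C\|w_0\|_{L^2}^{10}$ and hence $\|w(t)\|_{H^1}^2\le\|w_0\|_{H^1}^2+C\|w_0\|_{L^2}^{10}\,t\le\big(1+CM^8t\big)\|w_0\|_{H^1}^2\le e^{CM^8t}\|w_0\|_{H^1}^2$, which is the claimed form with $C=C(M)$. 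The same care is needed for the intermediate Leibniz terms when $n\ge2$: each must be absorbed so that the leftover coefficient is controlled by quantities already known to grow at most like $e^{Ct}$, otherwise the inductive constant will again pick up a sublinear power of $t$ in the exponent or an unwanted dependence on higher norms of $w_0$.
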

\begin{proof}
The first property follows from classical estimates for convection-diffusion equations (e.g. \cite{Escobedo:1991}). The second one is, precisely, Corollary 3.8 in \cite{Bouharguane:2013}. 
\end{proof}

Using the properties of flows $X^t$ and $Y^t$ in Lemma \ref{lemma:stab_xt} and Lemma \ref{lemma:stab_yt} we obtain similar properties for $Z^t$.
\begin{lemma}
	\label{prop.z}
	For any $r\in \{1,2\}$ the flow $Z^t$ satisfies the following estimate 
	\begin{equation*}
	\|Z^t u_0\|_{H^r(\R)}\leq C(\|u_0\|_{L^2(\R)}) \|u_0\|_{H^r(\R)}, \quad \forall t\in [0,1].
	\end{equation*}
	Moreover, 
	\begin{equation*}
		\|Z^t u_0\|_{L^2(\R)}\leq \|u_0\|_{L^2(\rr)}, \quad \forall t\geq 0.
	\end{equation*}
\end{lemma}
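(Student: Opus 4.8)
The plan is to exploit the composition structure $Z^t=X^tY^t$ and simply chain together the stability estimates already established in Lemma~\ref{lemma:stab_xt} and Lemma~\ref{lemma:stab_yt}; no new analysis is needed.

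First I would treat the $L^2$ bound (the ``Moreover'' part). Writing $Z^t u_0 = X^t(Y^t u_0)$, I apply the $p=2$ contraction of $X^t$ from the first item of Lemma~\ref{lemma:stab_xt}, followed by the $p=2$ contraction of $Y^t$ from the first item of Lemma~\ref{lemma:stab_yt}:
\[
	\|Z^t u_0\|_{L^2(\R)} = \|X^t(Y^t u_0)\|_{L^2(\R)} \le \|Y^t u_0\|_{L^2(\R)} \le \|u_0\|_{L^2(\R)}.
\]
Both inequalities hold for every $t\ge 0$, so this yields the second claim with no restriction on time and with constant one.

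For the $H^r$ bound with $r\in\{1,2\}$ I would argue in the same spirit. Applying the $H^s$ stability of $X^t$ (second item of Lemma~\ref{lemma:stab_xt}) with $s=r$ gives
\[
	\|Z^t u_0\|_{H^r(\R)} = \|X^t(Y^t u_0)\|_{H^r(\R)} \le \|Y^t u_0\|_{H^r(\R)}.
\]
Then I invoke the $H^n$ growth estimate of $Y^t$ (second item of Lemma~\ref{lemma:stab_yt}) with the integer $n=r$, with $M=\|u_0\|_{L^2(\R)}$, and with $T=1$, which is legitimate precisely because we restrict to $t\in[0,1]$. This produces $\|Y^t u_0\|_{H^r(\R)}\le e^{Ct}\|u_0\|_{H^r(\R)}$ for a constant $C=C(1,\|u_0\|_{L^2(\R)})$; bounding $e^{Ct}\le e^{C}$ on $[0,1]$ and setting $C(\|u_0\|_{L^2(\R)}):=e^{C(1,\|u_0\|_{L^2(\R)})}$ gives the stated estimate.

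There is no genuine obstacle in this lemma, as the substance has been carried out in the two preceding stability results. The only points requiring a little care are bookkeeping ones: that the $L^2$-norm feeding the constant $M$ in Lemma~\ref{lemma:stab_yt} is indeed $\|u_0\|_{L^2(\R)}$, which holds because $Y^t$ is the operator applied first to $u_0$; and that the restriction $t\in[0,1]$ is exactly what turns the a~priori time-dependent exponential factor into a finite constant depending only on $\|u_0\|_{L^2(\R)}$.
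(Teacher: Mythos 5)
Your proposal is correct and is exactly the argument the paper intends: the paper states this lemma without a separate proof, noting only that it follows by combining the stability properties of $X^t$ (Lemma~\ref{lemma:stab_xt}) and $Y^t$ (Lemma~\ref{lemma:stab_yt}) through the composition $Z^t=X^tY^t$, which is precisely what you do. Your bookkeeping remarks about taking $M=\|u_0\|_{L^2(\R)}$ and $T=1$ are the right details to check, and they hold.
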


\subsection{The nonlinear flow $S^t$}
Let us now focus on the nonlinear flow $S^t$ given by \eqref{eq:abe}. We first obtain estimates on the $H^n(\R)$-norm, $n\geq 0$ being an integer. We also  study the $L^2(\rr)$ Lipschitz property of the flow $S^t$ in balls of $H^1(\R)$.

\begin{lemma}\label{lemma:stab_st}
	Let  $T>0$. For every $u_0\in H^n(\R)$, $n\geq 1$ an integer,   solution $S^tu_0$ of \eqref{eq:abe} satisfies
	\begin{equation*}
		\|S^t u_0\|_{H^n(\R)}\le C(T,\|u_0\|_{H^{n-1}(\R)}) \|u_0\|_{H^n(\R)}, \quad\forall t\in[0,T].
	\end{equation*}
	Moreover, 
	\begin{equation*}
		\|S^tu_0\|_{L^2(\rr)}\leq \|u_0\|_{L^2(\rr)}\quad\forall t \in \rr.
	\end{equation*}
\end{lemma}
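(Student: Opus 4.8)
The plan is to establish both inequalities by energy estimates applied directly to \eqref{eq:abe}, treating the convective term as conservative, the diffusion and the nonlocal term as dissipative, and the genuinely nonlinear contribution by a Moser-type commutator estimate. For the $L^2$ bound I would multiply \eqref{eq:abe} by $u$ and integrate over $\R$: the convection disappears because $\int_\R u(u^2/2)_x=\int_\R(u^3/3)_x=0$, the drift because $\int_\R uu_x=0$, the diffusion gives $\int_\R uu_{xx}=-\|u_x\|_{L^2}^2\le0$, and the nonlocal part satisfies $\int_\R u(K*u-u)\le0$ by Young's inequality and $\|K\|_{L^1}=1$, exactly as in the proof of Lemma~\ref{lemma:stab_xt}. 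Hence $\frac{d}{dt}\|u\|_{L^2}^2\le0$ and the $L^2$ non-expansiveness follows.

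For the $H^n$ estimate I would differentiate \eqref{eq:abe} $n$ times, multiply by $v:=\partial_x^n u$ and integrate. The diffusion now contributes $-2\|\partial_x^{n+1}u\|_{L^2}^2\le0$, the drift vanishes, and the nonlocal term is again $\le0$, so the only term to control is $\int_\R\partial_x^n u\,\partial_x^n(uu_x)$. Expanding by Leibniz, integrating by parts the top-order piece $u\,\partial_x^{n+1}u$ and estimating the intermediate products by Gagliardo--Nirenberg inequalities leads to the standard bound $\big|\int_\R\partial_x^n u\,\partial_x^n(uu_x)\big|\le C\|u_x\|_{L^\infty}\|\partial_x^n u\|_{L^2}^2$, and therefore $\frac{d}{dt}\|\partial_x^n u\|_{L^2}^2\le C\|u_x\|_{L^\infty}\|\partial_x^n u\|_{L^2}^2$.

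The estimate then closes by induction on $n$, the issue at each step being to bound $\|u_x\|_{L^\infty}$ by the already-controlled lower-order norms. For $n\ge3$ this is immediate, since $H^{n-1}(\R)\hookrightarrow W^{1,\infty}(\R)$ gives $\|u_x\|_{L^\infty}\le C\|u\|_{H^{n-1}}$, so Gronwall's lemma and the induction hypothesis yield the constant $C(T,\|u_0\|_{H^{n-1}})$. I expect the main obstacle to be the two borderline cases $n=1,2$ demanded by Theorem~\ref{teo:order}, where $H^{n-1}$ fails to embed into $W^{1,\infty}$ and $\|u_x\|_{L^\infty}$ is genuinely of top order.

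To overcome this I would exploit the parabolic smoothing. Integrating the $L^2$ (resp.\ $H^1$) estimate in time furnishes the bound $\int_0^T\|\partial_x^{j+1}u\|_{L^2}^2\,dt\le C(T,\|u_0\|_{H^{j}})$ for $j=0,1$; combining this with the interpolation $\|u_x\|_{L^\infty}\le C\|u_x\|_{L^2}^{1/2}\|u_{xx}\|_{L^2}^{1/2}$ and H\"older's inequality in time shows that $\int_0^T\|u_x\|_{L^\infty}\,dt$ is finite and controlled by $\|u_0\|_{H^1}$, which upon Gronwall closes the case $n=2$. For the base case $n=1$ I would instead absorb the supercritical term into the dissipation by Young's inequality and use the interpolation $\|u_{xx}\|_{L^2}\ge\|u_x\|_{L^2}^2/\|u\|_{L^2}$ together with $\|u\|_{L^2}\le\|u_0\|_{L^2}$, arriving at $\frac{d}{dt}\|u_x\|_{L^2}^2\le -\|u_0\|_{L^2}^{-2}\|u_x\|_{L^2}^4+C\|u_x\|_{L^2}^{10/3}$; the negative quartic term dominates for large values and produces an a~priori bound on $\|u_x\|_{L^2}$ depending only on $\|u_0\|_{L^2}$, which then rearranges into the multiplicative form $C(\|u_0\|_{L^2})\|u_0\|_{H^1}$. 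All these formal manipulations would be justified on smooth approximate solutions and passed to the limit.
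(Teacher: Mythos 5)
Your proposal is correct, but it follows a genuinely different route from the paper. The paper's proof of the $L^2$ non-expansiveness is the same energy argument you give; for the $H^n$ bound, however, the authors do not perform energy estimates on $\partial_x^n u$ at all. They write the mild (Duhamel) formulation $u(t)=G_t*X^tu_0+\frac12\int_0^tG_{t-\tau}*X^{t-\tau}(u^2)_x(\tau)\,d\tau$, differentiate it, use that $X^t$ commutes with $\partial_x$ and is an $L^p$- and $W^{n,p}$-contraction (Lemma \ref{lemma:stab_xt}), invoke the heat-kernel decay $\|\partial_xG_{t-\tau}\|_{L^2}\simeq(t-\tau)^{-3/4}$ together with $\|uu_x\|_{L^1}\le\|u\|_{L^2}\|u_x\|_{L^2}$, and close with the fractional Gronwall lemma; the case $n>1$ then follows by induction on the $W^{n-1,1}$ norm of $uu_x$. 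The key difference is that in the mild formulation the regularity gain needed to handle the nonlinearity is supplied by the smoothing of the heat semigroup rather than by the dissipation term in an energy identity, so the borderline cases $n=1,2$ require no special treatment: the same integral inequality works for every $n$. Your energy-method alternative is more self-contained (no Duhamel, no fractional Gronwall) and yields some extra information, e.g. the space-time bound on $\int_0^T\|\partial_x^{n+1}u\|_{L^2}^2\,dt$ and a $T$-independent bound for $n=1$, but at the price of the case analysis you describe: Agmon interpolation plus parabolic smoothing for $n=2$, and the absorption argument $y'\le-cy^2+Cy^{5/3}$ for $n=1$. Both of those steps check out (note that the final rearrangement into the multiplicative form $C(\|u_0\|_{L^2})\|u_0\|_{H^1}$ uses $\|u_0\|_{L^2}\le\|u_0\|_{H^1}$, which is harmless), and the dissipativity of the nonlocal term at every derivative order is legitimate since $\partial_x$ commutes with $K*{}$. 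Two cosmetic slips: the diffusion in \eqref{eq:abe} has coefficient $1$, not $2$, so the dissipation is $-\|\partial_x^{n+1}u\|_{L^2}^2$ (up to the factor from $\tfrac{d}{dt}$ of the square), and the "standard bound" you quote is the Kato--Ponce/Moser commutator estimate, which indeed gives $C\|u_x\|_{L^\infty}\|\partial_x^nu\|_{L^2}^2$ after integrating the top-order term by parts.
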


\begin{proof}
The $L^2(\rr)$-stability of the flow $S^t$ has already been proved in \cite{Pozo:2014}. Indeed, multiplying equation \eqref{eq:abe} by $u(t)$ and using integration by parts one can obtain that
$$\|S_tu_0\|_{L^2(\rr)}\leq \|u_0\|_{L^2(\rr)}.$$

In the following we will denote by $G_t$ the heat kernel. We now recall some well-known results about its $L^p(\rr)$-norms:
\begin{equation}
\label{decay.heat}
  \|\partial_x^kG_t\|_{L^p(\rr)}\simeq t^{-\frac12(1-\frac 1p)-\frac k2}, \quad, \forall\, k\geq 0,k\in \mathbb{Z}, \, \forall \,p\in [1,\infty]. 
\end{equation}

Let us now consider the case $n\geq 1$. The variation of constants formula gives us that $S^tu_0=u(t)$ satisfies
\begin{equation*}
	u(t)=G_t*X^t u_0+\frac 12\int _0^t G_{t-\tau} * X^{t-\tau} (u^2)_x(\tau)d\tau.
\end{equation*}
Differentiating in the space variable $x$ and using that $X^t$ commutes with $\partial_x$, we obtain 
\begin{equation*}
	u_x(t)=G_t*X^t(\partial_x u_0)+ \frac12 \int_0^t \partial_x G_{t-\tau}*X^{t-\tau}(uu_x(\tau))d\tau.
\end{equation*}
Using Young's inequality, Lemma \ref{lemma:stab_xt} and the  estimates in \eqref{decay.heat} we have that
\begin{align*}
	\|u_x(t)\|_{H^{n-1}(\R)} & \lesssim \|G_t\|_{L^1(\R)}\|X^t(\partial_x u_0)\|_{H^{n-1}(\R)} \\
	&\qquad \qquad \qquad + \int_0^t \|\partial_x G_{t-\tau}\|_{L^2(\R)} \|X^{t-\tau}(uu_x(\tau))\|_{W^{n-1,1}(\R)} d\tau \\
	& \lesssim \|\partial_x u_0\|_{H^{n-1}(\R)} + \int_0^t (t-\tau)^{-\frac34} \|X^{t-\tau}(uu_x(\tau))\|_{W^{n-1,1}(\R)} d\tau\\
		& \lesssim \|u_0\|_{H^{n}(\R)} + \int_0^t (t-\tau)^{-\frac34} \| uu_x(\tau)\|_{W^{n-1,1}(\R)} d\tau.
\end{align*}
When $n=1$, the H\"older inequality and the $L^2(\R)$-stability of the flow $S^t$  yield
\begin{align*}
	\|u_x(t)\|_{L^2(\R)} & \le \|u_0\|_{H^1(\R)} + C \int_0^t (t-\tau)^{-\frac34} \|u(\tau)\|_{L^2(\R)} \|u_x(\tau)\|_{L^2(\R)} d\tau \\
	& \le \|u_0\|_{H^1(\R)} + C \|u_0\|_{L^2(\R)} \int_0^t (t-\tau)^{-\frac34}\|u_x(\tau)\|_{L^2(\R)} d\tau.
\end{align*}
Fractional Gronwall's Lemma \cite[Lemma 2.4]{Bouharguane:2013} gives us the desired estimate when $n=1$. The case $n>1$ follows by induction.
\end{proof}

\begin{lemma}\label{lemma:slip}
	Let $R,T>0$. There exist $\gamma=\gamma(R,T)<\infty$ such that if
	\begin{equation*}
		\|u_0\|_{H^1(\R)}\le R \quad\mbox{ and }\quad \|v_0\|_{H^1(\R)}\le R,
	\end{equation*}
	then
	\begin{equation*}
		\|S^tu_0-S^tv_0\|_{L^2(\R)}\le \gamma\|u_0-v_0\|_{L^2(\R)},\quad\forall t\in[0,T].
	\end{equation*}
\end{lemma}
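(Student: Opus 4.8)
The plan is to derive a Gronwall-type estimate for the $L^2$-norm of the difference $d(t) := S^tu_0 - S^tv_0$. First I would write down the variation-of-constants representations for $u(t) = S^tu_0$ and $v(t) = S^tv_0$, both using the semigroup $G_t * X^t$ generated by the linear part $u_{xx} + K*u - u + u_x$. Subtracting gives
\begin{equation*}
	d(t) = G_t*X^t(u_0-v_0) + \frac12\int_0^t G_{t-\tau}*X^{t-\tau}\bigl((u^2-v^2)_x(\tau)\bigr)\,d\tau.
\end{equation*}
Taking $L^2$-norms and using Young's inequality together with the $L^1\to L^1$ contraction of $G_t$ (so $\|G_t*\cdot\|_{L^2}\le\|\cdot\|_{L^2}$) and the $L^2$-stability of $X^t$ from Lemma \ref{lemma:stab_xt}, the linear term is controlled by $\|u_0-v_0\|_{L^2}$.

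For the nonlinear term I would avoid putting the derivative on $d$ by moving it onto the heat kernel: writing $(u^2-v^2)_x = \bigl((u+v)d\bigr)_x$ and integrating by parts against $\partial_x G_{t-\tau}$, one obtains an integrand of the form $\|\partial_x G_{t-\tau}\|_{L^2}\,\|(u+v)d(\tau)\|_{L^1}$. By the decay estimate \eqref{decay.heat}, $\|\partial_x G_{t-\tau}\|_{L^2}\simeq (t-\tau)^{-3/4}$, which is integrable near $\tau=t$. The factor $\|(u+v)d(\tau)\|_{L^1}$ is then bounded via Hölder by $\|u(\tau)+v(\tau)\|_{L^2}\,\|d(\tau)\|_{L^2}$, and the $L^2$-stability of $S^t$ (Lemma \ref{lemma:stab_st}) gives $\|u(\tau)+v(\tau)\|_{L^2}\le \|u_0\|_{L^2}+\|v_0\|_{L^2}\le 2R$, since the hypothesis $\|u_0\|_{H^1},\|v_0\|_{H^1}\le R$ controls the $L^2$-norms. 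Collecting the estimates yields
\begin{equation*}
	\|d(t)\|_{L^2(\R)} \le \|u_0-v_0\|_{L^2(\R)} + C(R)\int_0^t (t-\tau)^{-3/4}\,\|d(\tau)\|_{L^2(\R)}\,d\tau.
\end{equation*}

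Finally I would apply the fractional Gronwall lemma \cite[Lemma 2.4]{Bouharguane:2013}, already invoked in the proof of Lemma \ref{lemma:stab_st}, which handles precisely the weakly singular kernel $(t-\tau)^{-3/4}$. This produces a constant $\gamma = \gamma(R,T)$ with $\|d(t)\|_{L^2}\le \gamma\,\|u_0-v_0\|_{L^2}$ uniformly on $[0,T]$, as claimed.

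I expect the only genuine subtlety to be the handling of the nonlinearity: one must keep the spatial derivative off the difference $d$ (whose $H^1$-norm we do not wish to control) by integrating by parts onto the kernel, thereby trading one half-derivative for the integrable singularity $(t-\tau)^{-3/4}$. Everything else reduces to the stability estimates of Lemma \ref{lemma:stab_xt} and Lemma \ref{lemma:stab_st} and a routine application of the fractional Gronwall inequality.
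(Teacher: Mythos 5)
Your proof follows essentially the same route as the paper: Duhamel's formula with the semigroup $G_t\ast X^t$, moving the spatial derivative onto the heat kernel, and the fractional Gronwall lemma applied to an integrable singularity. The only (harmless) difference is the pairing in Young's inequality: the paper bounds the nonlinear term by $\|\partial_x G_{t-s}\|_{L^1}\|u^2-v^2\|_{L^2}\lesssim (t-s)^{-1/2}\|(u-v)(s)\|_{L^2}\left(\|u(s)\|_{H^1}+\|v(s)\|_{H^1}\right)$, which requires the $H^1$-propagation of Lemma \ref{lemma:stab_st} together with the embedding $H^1(\R)\hookrightarrow L^\infty(\R)$, whereas you use $\|\partial_x G_{t-s}\|_{L^2}\|u^2-v^2\|_{L^1}\lesssim (t-s)^{-3/4}\|(u-v)(s)\|_{L^2}\|u(s)+v(s)\|_{L^2}$, which needs only the $L^2$-contraction of $S^t$ at the price of a slightly worse (still integrable) singularity; both close the argument.
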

\begin{proof}
Let $u$ and $v$ be the corresponding solutions of \eqref{eq:abe} with $u_0$ and $v_0$ initial data. Writing the variation of parameters formula we obtain that
\begin{equation*}
	(u-v)(t)=G_t\ast X^t (u_0-v_0)+\frac 12\int _0^t G_{t-s}\ast X^{t-s} (u^2-v^2)_x(s)ds.
\end{equation*}
Using the decay of $G_t$ and its first derivative   in the $L^1(\rr)$-norm given by \eqref{decay.heat} we obtain 
\begin{align*}
	\|(u-v)(t)\|_{L^2(\rr)}&\leq \|G_t\ast X^t (u_0-v_0)\|_{L^2(\rr)}+\frac 12\int _0^t \|(G_{t-s})_x\ast X^{t-s} (u^2-v^2)(s)\|_{L^2(\rr)}ds\\
	& \lesssim \|u_0-v_0\|_{L^2(\rr)}+\int _0^t (t-s)^{-\frac12}\|(u^2-v^2)(s)\|_{L^2(\rr)}\\
	&\lesssim \|u_0-v_0\|_{L^2(\rr)}+\int _0^t (t-s)^{-\frac12}\|(u-v)(s)\|_{L^2(\rr)} (\|u(s)\|_{H^1(\rr)}+\|v(s)\|_{H^1(\rr)})ds.
\end{align*}
By Lemma \ref{lemma:stab_st} there exists a constant $C=C(T,\|u_0\|_{L^2(\rr)}, \|v_0\|_{L^2(\rr)})$ such that
\begin{align*}
	\|(u-v)(t)\|_{L^2(\rr)}&\leq \|u_0-v_0\|_{L^2(\rr)}+C (\|u_0\|_{H^1(\rr)}+\|v_0\|_{H^1(\rr)}) \int _0^t (t-s)^{-\frac12}\|(u-v)(s)\|_{L^2(\rr)} ds.
\end{align*}
Fractional Gronwall's lemma yields the result.
\end{proof}

\subsection{Local error estimates}
Before proving Theorem \ref{teo:order}, we obtain the error estimates  on $Z^t$ in  the $L^2(\rr)$-norm. We state this result in the following proposition.
\begin{prop}\label{prop:l2locz}
	Let $r\in \{1,2\}$ and $u_0\in H^r(\R)$. Then, there exists a positive constant $C=C(\|u_0\|_{L^2(\R)})$ such that for every $t\in(0,1)$  the following holds
	\begin{equation}\label{eq:l2locz}
		\|S^t u_0-Z^tu_0\|_{L^2(\R)}\le C \|u_0\|_{H^r(\R)}^2 t^{1+\frac r2}.
	\end{equation}
\end{prop}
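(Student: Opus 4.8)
The plan is to compare both flows through the variation-of-constants formula associated with the linear part common to both equations, namely $Lv=v_{xx}+K\ast v-v+v_x$, whose semigroup factors as $G_t\ast X^t$ (the factors commute, being Fourier multipliers). Writing $u(\tau)=S^\tau u_0$ and $w(\tau)=Y^\tau u_0$, Lemma~\ref{lemma:stab_st} gives
\[
u(t)=G_t\ast X^t u_0+\tfrac12\int_0^t G_{t-\tau}\ast X^{t-\tau}(u^2)_x(\tau)\,d\tau,
\]
while applying $X^t$ to the Duhamel formula for the Burgers flow and using that $X^t$ commutes with convolution by $G_s$ yields
\[
Z^t u_0=X^t w(t)=G_t\ast X^t u_0+\tfrac12\int_0^t G_{t-\tau}\ast X^t(w^2)_x(\tau)\,d\tau.
\]
Subtracting, inserting $X^{t-\tau}(w^2)_x$, and using $X^{t-\tau}-X^t=X^{t-\tau}(I-X^\tau)$, I obtain $S^tu_0-Z^tu_0=I_1+I_2$ with
\[
I_1=\tfrac12\int_0^t G_{t-\tau}\ast X^{t-\tau}(u^2-w^2)_x(\tau)\,d\tau,\quad I_2=\tfrac12\int_0^t G_{t-\tau}\ast X^{t-\tau}(I-X^\tau)(w^2)_x(\tau)\,d\tau.
\]

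For $I_2$, which is the genuinely splitting-induced error, I would exploit the local error estimate for $X^t$. Since convolution by $G_{t-\tau}$ and $X^{t-\tau}$ are $L^2$-contractions (Lemma~\ref{lemma:stab_xt}), Lemma~\ref{lemma:dt-i} applied to $(w^2)_x$ gives $\|(I-X^\tau)(w^2)_x\|_{L^2}\le\tau\|(w^2)_{xx}\|_{L^2}$, and a Gagliardo--Nirenberg estimate bounds $\|(w^2)_{xx}\|_{L^2}\lesssim\|w\|_{H^1}\|w\|_{H^2}$. For $r=2$, Lemma~\ref{lemma:stab_yt} controls $\|w(\tau)\|_{H^2}\lesssim\|u_0\|_{H^2}$ with a constant depending only on $\|u_0\|_{L^2}$, so $\|I_2\|_{L^2}\lesssim\|u_0\|_{H^2}^2\int_0^t\tau\,d\tau\sim t^2\|u_0\|_{H^2}^2$. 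For $r=1$ the factor $\|w(\tau)\|_{H^2}$ is no longer bounded up to $\tau=0$, and here I would invoke the parabolic smoothing of the Burgers flow, $\|w(\tau)\|_{H^2}\lesssim\tau^{-1/2}\|u_0\|_{H^1}$, which turns the integrand into $\tau^{1/2}$ and gives $\|I_2\|_{L^2}\lesssim t^{3/2}\|u_0\|_{H^1}^2$.

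For $I_1$, I first need that the two flows have separated by only $O(\tau)$: namely $\|S^\tau u_0-Y^\tau u_0\|_{L^2}\lesssim\tau\|u_0\|_{H^1}$ when $r=1$, and $\|S^\tau u_0-Y^\tau u_0\|_{H^1}\lesssim\tau\|u_0\|_{H^2}$ when $r=2$. Both follow from the same Duhamel representation: the leading term $G_\tau\ast(X^\tau-I)u_0$ is $O(\tau)$ by Lemma~\ref{lemma:dt-i} (applied also to $\partial_x u_0$ in the $H^1$ case, using that $X^t$ commutes with $\partial_x$), the remaining nonlinear terms are absorbed by the fractional Gronwall lemma \cite[Lemma 2.4]{Bouharguane:2013}, and all stability constants depend on $\|u_0\|_{L^2}$ through Lemmas~\ref{lemma:stab_st} and \ref{lemma:stab_yt}. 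With this in hand, for $r=1$ I would move the derivative onto the heat kernel, using $\|\partial_x G_{t-\tau}\|_{L^1}\lesssim(t-\tau)^{-1/2}$ from \eqref{decay.heat} together with $\|u^2-w^2\|_{L^2}\lesssim\|u-w\|_{L^2}(\|u\|_{H^1}+\|w\|_{H^1})$, so that $\|I_1\|_{L^2}\lesssim\|u_0\|_{H^1}^2\int_0^t(t-\tau)^{-1/2}\tau\,d\tau\sim t^{3/2}\|u_0\|_{H^1}^2$. For $r=2$ the exponent $3/2$ is too small, so I would instead keep the derivative on the nonlinearity, estimating $\|(u^2-w^2)_x\|_{L^2}\lesssim\|u-w\|_{H^1}(\|u\|_{H^1}+\|w\|_{H^1})\lesssim\tau\|u_0\|_{H^2}^2$ and integrating the non-singular kernel to obtain $\|I_1\|_{L^2}\lesssim t^2\|u_0\|_{H^2}^2$. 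Collecting the two pieces yields \eqref{eq:l2locz}.

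The main obstacle is the $r=1$ case. First, it forces the use of parabolic smoothing for the Burgers semigroup, in order to bound $\|w(\tau)\|_{H^2}$ by an integrable $\tau^{-1/2}$; this is not among the stability estimates stated so far and must be established separately, with constants depending only on $\|u_0\|_{L^2}$. Second, the target exponent $t^{1+r/2}$ is sharp enough that one must place the spatial derivative differently in $I_1$ for the two values of $r$ — on the kernel for $r=1$, on the nonlinearity for $r=2$ — since the wrong placement caps the rate at $t^{3/2}$. Finally, some care is needed to keep the argument non-circular, as the auxiliary bound on $\|S^\tau u_0-Y^\tau u_0\|$ is itself a splitting-type comparison, and to verify that every constant ultimately depends only on $\|u_0\|_{L^2}$, so that the explicit regularity factors assemble into $\|u_0\|_{H^r}^2$.
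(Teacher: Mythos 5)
Your argument follows the same basic skeleton as the paper's proof (Duhamel formula with the factored linear semigroup $G_t\ast X^t$, insertion of intermediate terms, the local error estimate of Lemma \ref{lemma:dt-i}, heat-kernel decay, fractional Gronwall), but the decomposition is genuinely different and this changes the bookkeeping in two places. The paper splits the difference into \emph{three} terms by routing through $(Z^su_0)^2$: the term analogous to your $I_1$ is there written as $D_{t-s}\ast G_{t-s}\ast\big(u^2(s)-(Z^su_0)^2\big)_x$, whose $L^2$-norm is bounded by $(t-s)^{-3/4}\|u_0\|_{L^2}\|u(s)-Z^su_0\|_{L^2}$ and absorbed at the very end by fractional Gronwall applied to the unknown $\|u(t)-Z^tu_0\|_{L^2}$ itself; this avoids entirely your auxiliary lemma $\|S^\tau u_0-Y^\tau u_0\|\lesssim\tau$, which in your scheme is an extra splitting-type comparison that must be proved separately (your sketch of it is sound, but it is real additional work, and in the $H^1$-version it needs the product estimate $\|fg\|_{H^1}\lesssim\|f\|_{H^1}\|g\|_{H^1}$ plus another Gronwall). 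Second, for the term corresponding to your $I_2$ the paper applies Lemma \ref{lemma:dt-i} \emph{after} convolving with the heat kernel, i.e.\ it bounds $s\,\|G_{t-s}\ast((Z^su_0)^2)_x\|_{\dot H^1}=s\,\|\partial_x^{2-r}G_{t-s}\ast\partial_x^r((Z^su_0)^2)\|_{L^2}\le s\,(t-s)^{-(2-r)/2}\|Z^su_0\|_{H^r}^2$, so that for $r=1$ the missing derivative is paid by the integrable singularity $(t-s)^{-1/2}$ rather than by parabolic smoothing of $Y^t$. Since $(I-X^\tau)$ commutes with convolution by $G_{t-\tau}$, you could reorganize your $I_2$ in exactly this way and dispense with the smoothing estimate $\|Y^\tau u_0\|_{H^2}\lesssim\tau^{-1/2}\|u_0\|_{H^1}$, which, while true, is not among the stated lemmas and would have to be proved. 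Finally, a small mismatch with the statement: because your auxiliary bound on $\|S^\tau u_0-Y^\tau u_0\|$ comes from a Gronwall inequality whose coefficient involves $\|u(s)\|_{H^1}+\|w(s)\|_{H^1}$, the resulting constant depends on $\|u_0\|_{H^1}$ and not only on $\|u_0\|_{L^2}$ as claimed in \eqref{eq:l2locz}; the paper's self-referential Gronwall keeps the constant at $C(\|u_0\|_{L^2})$ because the $R_1$ coefficient only involves $L^2$-norms. This weaker dependence is harmless for the proof of Theorem \ref{teo:order}, where the final constant is allowed to depend on $\|u_0\|_{H^1}$, but it does fall short of the proposition as stated.
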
	
	
\begin{proof}
We write the variation of constants formula both for $S^tu_0=u(t)$ solution of \eqref{eq:abe} and $Z^t$ given by \eqref{eq:zt}. We have that they satisfy
\begin{equation*}
	u(t)=D_t*G_t*u_0+\frac12 \int_0^t D_{t-s}*G_{t-s}*\left(u^2(s)\right)_x ds,
\end{equation*}
and
\begin{equation*}
	Z^tu_0=X^tY^tu_0=D_t*Y^tu_0=D_t*\left(G_t*u_0+\frac12 \int_0^t G_{t-s}*\left((Y^su_0)^2\right)_x ds\right).
\end{equation*}
Therefore,
\begin{align}\label{eq:z-u}
	\|u(t)-Z^tu_0\|_{L^2(\R)} &\le \frac12 \int_0^t \left\|D_{t-s}*G_{t-s}*\left(u^2(s)\right)_x-D_t*G_{t-s}*\big((Y^su_0)^2\big)_x\right\|_{L^2(\R)}ds \\
	&\le \frac12\int_0^t \Big( \| R_1(s) \|_{L^2(\R)} + \| R_2(s) \|_{L^2(\R)} + \| R_3(s) \|_{L^2(\R)} \Big)ds, \notag
\end{align}
where
\begin{align*}
	R_1(s) &= D_{t-s}*G_{t-s}*\left(u^2(s)\right)_x - D_{t-s}*G_{t-s}*\left((Z^su_0)^2\right)_x, \\
	R_2(s) &= D_{t-s}*G_{t-s}*\left((Z^su_0)^2\right)_x - D_t*G_{t-s}*\left((Z^su_0)^2\right)_x, \\
	R_3(s) &= D_t*G_{t-s}*\left((Z^su_0)^2\right)_x-D_t*G_{t-s}*\left((Y^su_0)^2\right)_x.
\end{align*}

We now estimate each of these terms. 

\noindent\textit{Estimates on $R_1$}. Using Young's inequality and the $L^2(\rr)$-estimates on $\partial_xG_t$, $u(t)$ and $Z^su_0$ we obtain
\begin{align}\label{est.r0}
	\|R_1(s)\|_{L^2(\R)} &\le \left\|D_{t-s}\|_{L^1(\rr)}\|\partial_x G_{t-s}\right\|_{L^2(\R)}\left\|u^2(s)-(Z^su_0)^2 \right\|_{L^1(\R)} \\
	&\lesssim (t-s)^{-\frac34}\left\|u(s)-Z^su_0 \right\|_{L^2(\R)} (\|u(s)\|_{L^2(\R)}+\left\|Z^su_0\right\|_{L^2(\R)} ) \notag\\
	&\lesssim (t-s)^{-\frac34} \|u_0 \|_{L^2(\R)} \left\|u(s)-Z^su_0 \right\|_{L^2(\R)} .\notag
\end{align}

\noindent\textit{Estimates on $R_2$}.  For any $t\in(0,1)$, from Lemma \ref{lemma:dt-i} we have
\begin{align*}
	\|R_2(s)\|_{L^2(\R)}&= \|(D_{t-s} - D_t)*G_{t-s}*\left((Z^su_0)^2\right)_x\|_{L^2(\R)}
	\lesssim s\|G_{t-s}* \left((Z^su_0)^2\right)_x \|_{\dot H^1(\R)}	\\
	&=s\|G_{t-s}* (Z^su_0)^2\|_{\dot H^2(\R)}	=s\|\partial_x^{2-r} G_{t-s}* \partial_x^{r}\left((Z^su_0)^2\right) \|_{L^2(\R)} \\
	&\leq s\|\partial_x^{2-r} G_{t-s}\|_{L^1(\R)}\| \partial_x^{r}((Z^su_0)^2) \|_{L^2(\R)}	 \\
	&\leq s(t-s)^{-\frac{2-r}2} \|(Z^su_0)^2\|_{H^r(\R)}\leq s(t-s)^{-\frac{2-r}2} \|Z^su_0\|_{H^r(\R)}^2.
\end{align*}
Integrating on the time interval $[0,t]$ and using the estimates on the $H^r(\rr)$-norm of $X^t$ and $Y^t$ in Lemma \ref{lemma:stab_xt} and Lemma \ref{lemma:stab_yt} we obtain
\begin{align}\label{est.r1}
	\int _0^t \|R_2(s)\|_{L^2(\R)} ds & \lesssim \|Z^s u_0\|_{L^\infty((0,t),H^r(\R))}^2 \int _0^t s(t-s)^{-\frac{2-r}2}ds \\
	&\leq C(\|u_0\|_{L^2(\R)}) \|u_0\|_{H^r(\R)}^2 t^{1+\frac r2}. \notag
\end{align}

\noindent\textit{Estimates on $R_3$}. Observe that
\begin{align*}
	\|R_3(s)\|_{L^2(\R)}&= \|D_t* G_{t-s}* \left((Z^su_0)^2-(Y^su_0)^2\right)_x\|_{L^2(\R)} \\
	& \leq \| G_{t-s}* \left((Z^su_0)^2-(Y^su_0)^2\right)_x\|_{L^2(\R)}.
\end{align*}
We  distinguish between the cases $r=1$ and $r=2$. When $r=1$, using Young's inequality, Lemma \ref{lemma:stab_yt} and the embedding $H^1(\rr) \hookrightarrow L^\infty(\rr)$ 
we obtain:
\begin{align*}
	\|R_3(s)\|_{L^2(\R)} &\leq \| \partial_x G_{t-s}\|_{L^1(\R)} \| (Z^s u_0)^2-(Y^s u_0)^2\|_{L^2(\R)} \\
	&\lesssim (t-s)^{-\frac12}\| X^sY^s u_0 - Y^s u_0\|_{L^2(\R)} (\|Z^s u_0\|_{L^\infty(\R)} +\|Y^s u_0\|_{L^\infty(\R)} )\\
	&\lesssim s (t-s)^{-\frac12} \|Y^s u_0\|_{H^1(\R)}^2.
 \end{align*}
For $r=2$ we proceed as follows:
\begin{align*}
\|R_3(s)\|_{L^2(\R)}&\leq \|G_{t-s}\|_{L^1(\R)} \| (Z^su_0)^2-(Y^su_0)^2)\|_{H^1(\R)}\\
	&\leq \|Z^su_0 - Y^su_0\|_{H^1(\R)} \|X^sY^su_0 + Y^su_0\|_{H^1(\R)} \\
	&\lesssim \|Z^su_0 - Y^su_0\|_{H^1(\R)} \| Y^su_0\|_{H^1(\R)} \lesssim s \| Y^su_0\|_{H^2(\R)}^2.
\end{align*}
Therefore, in both cases $r\in \{1,2\}$ we have
\begin{equation}\label{est.r2.2}
	\int _0^t \|R_3(s)\|_{L^2(\R)}\lesssim \|Y^su_0\|_{L^\infty((0,t),H^{r}(\R))}^2 \int _0^t (t-s)^{\frac {r-2}2}sds \le C(\|u_0 \|_{L^2(\R)})\|u_0\|_{H^{r}(\R)}^2 t^{1 +\frac r2}.
\end{equation}

Finally, plugging estimates \eqref{est.r0}, \eqref{est.r1} and \eqref{est.r2.2} into \eqref{eq:z-u}, we obtain
\begin{align*}
	\|u(t)-Z^tu_0\|_{L^2(\R)} &\le C(\|u_0 \|_{L^2(\R)})\|u_0\|_{H^{r}(\R))}^2 t^{1+\frac r2} \\
	&\qquad\qquad+ C(\|u_0 \|_{L^2(\R)}) \int_0^t (t-s)^{-\frac34}\left\|u(s) - Z^su_0 \right\|_{L^2(\R)} ds .
\end{align*}
Applying fractional Gronwall's Lemma \cite[Lemma 2.4]{Bouharguane:2013} we conclude that
\begin{align*}
	\|u(t)-Z^tu_0\|_{L^2(\R)} \le C(\|u_0 \|_{L^2(\R)})\|u_0\|_{H^{r}(\R)}^2 t^{1 +\frac r2}.
\end{align*}
The proof is now finished.
\end{proof}

\subsection{Proof of Theorem \ref{teo:order}}
We are now able to prove Theorem \ref{teo:order}. It is enough to follow ``Lady Windermere's fan'' argument (as in \cite{Holden:2013}; see also \cite{Bouharguane:2013}). Let us denote by $u^n=Z^{n\dt} u_0$ the approximate solution and $u^n_m=S^{(n-m)\dt} u^m$. Note that $u^n=u^n_n$ and $u(n\dt)=u^n_0$.

Let us first observe a few properties of the sequence $u^n =Z^{n\dt} u_0 $. Lemma \ref{prop.z} shows that
\begin{equation}\label{est.z.2}
	\|u^n\|_{L^2(\rr)}\leq \|u_0\|_{L^2(\rr)}, \quad \forall \, n\geq 0.
\end{equation}
Let us now consider the case of the $H^r(\rr)$ norms, $r\in \{1,2\}$. First, recall that by Lemma \ref{lemma:stab_yt} and estimate \eqref{est.z.2} there exists a positive constant $C=C(\|u_0\|_{L^2(\rr)})$ such that for any $\dt\leq 1$ the following holds:
\begin{equation*}
	\|Y^{\dt} u^m\|_{H^r(\rr)}\leq e^{C\dt} \|u^m\|_{H^r(\rr)}, \quad \forall \, m\geq 0.
\end{equation*}
This implies that
\begin{equation*}
	\|Z^{n\Delta t}u_0\|_{H^r(\rr)}=\|X^\dt Y^\dt u^{n-1}\| _{H^r(\rr)}\leq e^{C\dt} \|u^{n-1}\|_{H^r(\rr)}\leq e^{Cn\dt }\|u_0\|_{H^r(\rr)}, \quad \forall \, n\geq 0.
\end{equation*}
As long as $n\dt\leq T$ we get that $u^n=Z^{n\dt}u_0$ satisfies
\begin{equation}\label{est.z.hr}
	\|Z^{n\Delta t}u_0\|_{H^r(\rr)}\leq e^{CT}\|u_0\|_{H^r(\rr)}
\end{equation}
and the second part of Theorem \ref{teo:order} is proved.

We now proceed to obtain the global error estimate. We have
\begin{align}\label{eq:estimteo51}
	\|Z^{n\Delta t}u_0-u(n\Delta t)\|_{L^2(\R)} &\le \sum_{m=0}^{n-1} \|u^n_{m+1}-u^n_m\|_{L^2(\R)} \\
	& = \sum_{m=0}^{n-1} \|S^{(n-m-1)\dt} (Z^\dt u^m)-S^{(n-m-1)\dt} (S^{\dt} u^m)\|_{L^2(\R)}. \notag
\end{align}
In order to apply Lemma \ref{lemma:slip} we need to guarantee that $Z^\dt u^m=Z^{(m+1)\dt}u_0$ and $S^{\dt} u^m$ belong to some fix ball of $H^1(\rr)$ for $0\leq m\leq n-1$. For $Z^\dt u^m$ we  proved  such an estimate in  \eqref{est.z.hr}. For the second term, since $\dt\leq 1$, by Lemma \ref{lemma:stab_st} we have
\begin{equation*}
	\|S^\dt u^m\|_{H^1(\rr)}\leq C(1,\|u_0\|_{L^2(\rr)}) \|u^m\|_{H^1(\rr)} \leq C(\|u_0\|_{L^2(\rr)}) e^{TC(\|u_0\|_{L^2(\rr)})}\|u_0\|_{H^1(\rr)}.
\end{equation*}
 We now apply Lemma \ref{lemma:slip} with $r=1$. This  gives us that for some constant $C=C(T,\|u_0\|_{H^1(\rr)})$
\begin{equation*}
	\|Z^{n\Delta t}u_0-u(n\dt)\|_{L^2(\R)} \le C \sum_{m=0}^{n-1} \|Z^\dt u^m -S^{\dt} u^m \|_{L^2(\R)}.
\end{equation*}

Using the local error estimate obtained in Proposition \ref{prop:l2locz} we finally conclude that
\begin{equation*}
	\|Z^{n\Delta t}u_0-u(n\dt)\|_{L^2(\R)} \le C \sum_{m=0}^{n-1} \|u^m \|_{H^r(\R)}^2 (\dt)^{1+\frac{r}2}\leq C(T,\|u_0\|_{H^1}) \|u_0\|_{H^r(\rr)}^2 (\dt)^{\frac{r}2}.
\end{equation*}
The proof of the global error estimate is complete.


\section{Large-time behavior}\label{sec3}
In this section, we first  analyze the decay properties of  $\uud$ and then we prove Theorem \ref{teo:asymptotic}. We first observe that $\uud$, defined by \eqref{u.delta}, 
satisfies the following equation
\begin{equation}\label{eq:spliteq}
	\begin{cases}
		\displaystyle \uud_t=2\psidt(t)\uud_{xx}+\psidt(t)((\uud)^2)_x+2\left(1-\psidt(t)\right)(K*\uud-\uud+\uud_x),&t>0, \\[10pt]
		\uud(0)=u_0,
	\end{cases}
\end{equation}
where  $\psidt(t)=\sum_{n\ge0} \chi_{I^\Delta_n}(t)$ and $I^{\Delta}_n=(t_n,t_{n+1/2})$. 

\subsection{Estimates for the $L^p(\R)$-norm.}
First, we need to obtain decay estimates for the $L^p$-norms of $\uud$. We proceed as in \cite{Ignat:2007,Ignat:2009}.
\begin{lemma}\label{lemma:decay}
For any $u_0\in L^1(\R)\cap L^\infty(\R)$ and $p\in [1,\infty)$ there exists a positive constant $C=C(p,\|u_0\|_{L^1(\R)},\|u_0\|_{L^\infty(\R)})>0$ such that the solution of system \eqref{eq:spliteq} satisfies
	\begin{equation}\label{decay.uud}
		\|\uud(t)\|_{L^p(\R)}\le C (t+1)^{-\frac12(1-\frac 1p)},\quad\forall t>0.
	\end{equation}
	Moreover, the $L^1(\rr)$-norm of $\uud$ does not increase.
\end{lemma}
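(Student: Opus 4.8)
The plan is to run an energy estimate directly on the non-autonomous equation \eqref{eq:spliteq}, exploiting that on each half-period the dynamics reduces to either the (doubled-speed) viscous Burgers flow or the (doubled-speed) flow $X$, and then to convert the resulting differential inequality into a decay rate by the Nash/Gagliardo--Nirenberg inequality together with an induction on the exponent. Throughout I fix $p\in[2,\infty)$ and track $\phi(t)=\|\uud(t)\|_{L^p(\R)}^p$.

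First I would multiply \eqref{eq:spliteq} by $p|\uud|^{p-2}\uud$ and integrate over $\R$. On a diffusive interval $I_n^\Delta$ (where $\psidt=1$) the convection term $\int_\R p|\uud|^{p-2}\uud\,((\uud)^2)_x\,dx$ is the integral of a total $x$-derivative and hence vanishes, while the diffusion term produces $-\tfrac{8(p-1)}{p}\|\partial_x(|\uud|^{p/2})\|_{L^2(\R)}^2$. On an $X$-interval (where $\psidt=0$) the transport term $\int_\R p|\uud|^{p-2}\uud\,\uud_x\,dx$ is again a total derivative and vanishes, while the nonlocal term $2p\int_\R|\uud|^{p-2}\uud\,(K*\uud-\uud)\,dx$ is nonpositive because $K$ is a probability kernel and $z\mapsto|z|^p$ is convex (this is the $L^p$-dissipativity behind Lemma \ref{lemma:stab_xt}; see \cite{Escobedo:1991}). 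Consequently $\|\uud(t)\|_{L^p(\R)}$ is non-increasing for every $p$, which already yields that the $L^1(\R)$-norm does not increase and that $\|\uud(t)\|_{L^\infty(\R)}\le\|u_0\|_{L^\infty(\R)}$. The gain is the sharper inequality, valid on the diffusive intervals,
\begin{equation*}
	\frac{d}{dt}\|\uud(t)\|_{L^p(\R)}^p \le -\frac{8(p-1)}{p}\,\psidt(t)\,\|\partial_x(|\uud(t)|^{p/2})\|_{L^2(\R)}^2 .
\end{equation*}

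Next I would insert the one-dimensional Nash inequality $\|g\|_{L^2(\R)}^6\le C\|\partial_x g\|_{L^2(\R)}^2\|g\|_{L^1(\R)}^4$ with $g=|\uud|^{p/2}$, using $\|g\|_{L^2(\R)}^2=\|\uud\|_{L^p(\R)}^p$ and $\|g\|_{L^1(\R)}=\|\uud\|_{L^{p/2}(\R)}^{p/2}$, to obtain on the diffusive intervals
\begin{equation*}
	\phi'(t)\le -C\,\psidt(t)\,\frac{\phi(t)^3}{\|\uud(t)\|_{L^{p/2}(\R)}^{2p}} .
\end{equation*}
The decay is thus fed by the decay of the lower norm $\|\uud\|_{L^{p/2}(\R)}$, which dictates an induction over $p=2^k$: for the base case $p=2$ one uses $\|\uud\|_{L^1(\R)}\le\|u_0\|_{L^1(\R)}$, and the inductive step uses the bound \eqref{decay.uud} already established for the exponent $p/2$.

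Finally, to pass from the differential inequality to \eqref{decay.uud} I would set $y(t)=\phi(t)^{-2}$, so that $y$ is non-decreasing (constant on the $X$-intervals, increasing on the diffusive ones), and its increment over the diffusive half of the $n$-th period is bounded below by $c(1+t_n)^{\,p-2}\dt$ once the inductive bound for $\|\uud\|_{L^{p/2}(\R)}$ is inserted. Summing these increments is a Riemann sum for $c\int_0^t(1+s)^{p-2}\,ds\simeq(1+t)^{p-1}$, whence $\phi(t)\lesssim(1+t)^{-(p-1)/2}$, i.e. $\|\uud(t)\|_{L^p(\R)}\lesssim(1+t)^{-\frac12(1-1/p)}$; the remaining exponents $p\in[1,\infty)$ then follow by interpolation between consecutive powers of two and between $L^1(\R)$ and $L^2(\R)$. \emph{The main obstacle} is the on-off structure carried by $\psidt$: genuine decay is produced only on the diffusive half-periods, so one must check that the accumulated diffusion over many periods reproduces exactly the continuous heat rate, while simultaneously closing the nonlinear feedback through $\|\uud\|_{L^{p/2}(\R)}$ by the induction above.
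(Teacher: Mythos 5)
Your proof is correct, but at the decisive step it diverges from the paper's argument in an interesting way. The paper also multiplies \eqref{eq:spliteq} by $u^{p-1}$ and integrates, but instead of discarding the nonlocal dissipation on the $X$-intervals, it bounds \emph{both} dissipative terms from below by one and the same functional: using \eqref{est.J.grad} the local term $\psidt(t)\|\partial_x(u^{p/2})\|_{L^2(\R)}^2$ dominates $\psidt(t)\iint J(x-y)(u^{p/2}(x)-u^{p/2}(y))^2$, while the symmetrization identity \eqref{simetrizare} shows the nonlocal term dominates $(1-\psidt(t))\iint J(x-y)(u^{p/2}(x)-u^{p/2}(y))^2$. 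Since $\psidt+(1-\psidt)=1$, the on--off structure disappears and one gets a single differential inequality, valid for \emph{all} $t$, driven by the nonlocal quadratic form; the decay then follows from the nonlocal Nash-type machinery of \cite{Ignat:2007,Ignat:2009}. You instead keep only the local dissipation, active on half of each period, and close the argument with the classical Nash inequality plus the observation that the Riemann sum of $(1+s)^{p-2}$ over the diffusive half-intervals is still comparable to $\int_0^t(1+s)^{p-2}ds$. This works (and is arguably more elementary, avoiding the nonlocal Nash inequality), at the cost of the extra bookkeeping over half-periods that the paper's unified inequality renders unnecessary; note also that, as your own computation of the sign of $2p\int|\uud|^{p-2}\uud(K*\uud-\uud)$ shows, the $X$-intervals are not merely neutral but genuinely dissipative, which is exactly the quantitative information the paper exploits. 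Two cosmetic points: $y=\phi^{-2}$ is non-decreasing rather than constant on the $X$-intervals, and the non-increase of the $L^1$-norm is obtained in the paper directly from the $L^1$-stability of the flows $X^t$ and $Y^t$ applied to the definition \eqref{u.delta}, rather than from the energy identity.
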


\begin{proof}For $p=1$ the desired estimate on $\uud$ follows from the definition of  \eqref{u.delta} and the $L^1(\rr)$ stability property of the flows $X^t$ and $Y^t$. Let us now consider $p=2^k$ where $k\geq 1$ is an integer. We emphasize that in order to prove the decay property \eqref{decay.uud} it is sufficient to consider these particular cases of $p$. To simplify the presentation we will denote by $u$ the solution of equation \eqref{eq:spliteq}. Multiplying equation \eqref{eq:spliteq} by $u^{p-1}$ and integrating on $\R$, we obtain 
\begin{align}\label{eq:estimdisc1}
	\frac1p\frac{d}{dt}\|u(t)\|_{L^p(\R)}^p&=-C(p)\psidt(t)\|(u^{{p/2}})_x(t)\|_{L^2(\R)}^2+2(1-\psidt(t))\int_\R(K*u-u)u^{{p-1}}(t,x) dx\\
	\nonumber &=I_{1}+I_{2}.
\end{align}
Let us denote $J(z)=\frac12(K(z)+K(-z))$. We have that $J$ is an even function with mass one. Using the inequality (see \cite[Lemma 2.3]{Ignat:2015} for full details)
\begin{equation}\label{est.J.grad}
	\int_\R\int_\R J(x-y)\big(\varphi(x)-\varphi(y)\big)^2dxdy \le \int _{\R} J(z)z^2dz \int_\R\varphi_x^2dx = 2 \|\varphi_x\|_{L^2(\R)}^2,
\end{equation}
with $\varphi=u^{p/2}$ we obtain that the first term in the right hand side of \eqref{eq:estimdisc1} satisfies
\begin{equation}\label{ineg.501}
	I_{1}\leq - C(p,J) \psi^{\Delta t}(t) \int_\R\int_\R J(x-y)(u^{p/2}(x)-u^{p/2}(y))^2dxdy.
\end{equation}
We now prove a similar estimate for $I_{2}$. Elementary inequalities and the fact that $K$ has mass one show that the following holds:
\begin{equation*}
	\iint _{\rr^{2}} K(x-y)u(y)u^{p-1}(x)dxdy\leq \frac 2p \iint _{\rr^{2}} K(x-y)u^{{p/2}}(y)u^{p/2}(x)dxdy+ \left(1-\frac 2p\right) \int _{\rr}u^{p}(x)dx.
\end{equation*}
This implies that $I_{2}$ satisfies
\begin{equation*}
	I_{2}\leq \frac 4p \big(1-\psi^{\Delta t}(t)\big) \int _{{\rr}} (K\ast  u^{p/2}-u^{p/2})u^{p/2}.
\end{equation*}
Note that for any function $\varphi\in L^2(\rr)$ we have
\begin{align}\label{simetrizare}
	\int_\R(K*\varphi-\varphi)\varphi dx& =-\frac12\int_\R\int_\R K(x-y)\big(\varphi(x)-\varphi(y)\big)^2dxdy \nonumber\\
	&=-\frac12\int_\R\int_\R J(x-y)\big(\varphi(x)-\varphi(y)\big)^2dxdy
\end{align}
Therefore, we have that $I_{2}$ satisfies 
\begin{equation}\label{ineg.502}
I_{2}\leq -\frac 2p (1-\psi^{\Delta t}(t))  \int_\R\int_\R J(x-y)\big(u^{p/2}(x)-u^{p/2}(y)\big)^2dxdy.
\end{equation}

Using estimates \eqref{ineg.501} and \eqref{ineg.502} in \eqref{eq:estimdisc1} we obtain
that for some positive constant $C(J,p)$ we have
\begin{equation}\label{ineg.503}
	\frac{d}{dt}\|u(t)\|_{L^p(\R)}^p\leq -C(J,p)\int_\R\int_\R J(x-y)\big(u^{{p/2}}(t,x)-u^{p/2}(t,y)\big)^2dxdy.
\end{equation}
When $p=2$ following the same arguments as in \cite{Ignat:2007,Ignat:2009} we conclude that
\begin{equation*}
	\|\uud(t)\|_{L^2(\R)} \lesssim \frac{\|u_0\|_{L^2(\R)}}{(t+1)^{1/2}}+\frac{\|u_0\|_{L^1(\R)}}{(t+1)^{1/4}}.
\end{equation*}
Using estimate \eqref{ineg.503} and  proceeding as in \cite{Ignat:2007}, we obtain  the estimates for the remaining $L^p(\R)$-norms for $p>2$.
\end{proof}

\subsection{Estimates on the rescaled solutions.}

In order to prove Theorem \ref{teo:asymptotic} we introduce the following family of rescaled functions. Given $\lambda>0$, we define $\ul$ by
\begin{equation}\label{eq:ul}
	\ul(t,x)=\lambda \uud(\lambda^2 t,\lambda x).
\end{equation}
It follows that $\ul$ is the solution of the following rescaled system
\begin{equation}\label{eq:ulambda}
	\begin{cases}
		\ul_t=2\psil(t)\ul_{xx}+2\psil(t)\ul\ul_x+2\left(1-\psil(t)\right)(\lambda^2 K_\lambda*\ul-\lambda^2\ul+\lambda \ul_x),&t>0, \\[10pt]
		\ul(0,x)=\lambda u_0(\lambda x),
	\end{cases}
\end{equation}
where $K_\lambda(z)=\lambda K(\lambda z)$.  We emphasize that in order to obtain the equation satisfied by $u^\lambda$ we used that 
\begin{equation*}
  \psidt(\lambda^2 t)=\sum_{n\ge1} \chi_{(n\dt<\lambda^2 t<(n+\frac12)\dt)} = \sum _{n\ge1} \chi_{(\frac{\dt}{\lambda^2}<t<(n+\frac12)\frac{\dt}{\lambda^2})} = \psil(t).
\end{equation*}

The results in Lemma \ref{lemma:decay} extend immediately to the family $\{\ul\}_{\lambda>0}$.
\begin{lemma}\label{lemma:uldecay}
Let $u_0\in L^1(\R)\cap L^\infty(\R)$. For any $p\in [1,\infty)$ there exists a positive constant $C=C(p,\|u_0\|_{L^1(\R)},\|u_0\|_{L^\infty(\R)})$
such that 
	\begin{equation*}
		\|\ul(t)\|_{L^p(\R)} \le C t^{-\frac 12(1-\frac 1p)},
	\end{equation*}
holds for any $\lambda>0$. Moreover, the mass of $\ul$ is conserved.
\end{lemma}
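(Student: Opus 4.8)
The plan is to treat $\ul$ purely as a rescaling of $\uud$ and to transport the estimates of Lemma~\ref{lemma:decay} through a change of variables, rather than re-deriving energy estimates from \eqref{eq:ulambda}. The crucial observation is that $\ul(t,x)=\lambda\uud(\lambda^2 t,\lambda x)$ is built from the \emph{same} function $\uud$, hence from the same fixed initial datum $u_0$; consequently the constant furnished by Lemma~\ref{lemma:decay} is one and the same for every $\lambda>0$, which is exactly what a uniform-in-$\lambda$ bound requires.

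First I would compute the $L^p$-norm by the substitution $y=\lambda x$:
\begin{equation*}
\|\ul(t)\|_{L^p(\R)}^p=\int_\R \lambda^p|\uud(\lambda^2 t,\lambda x)|^p\,dx=\lambda^{p-1}\|\uud(\lambda^2 t)\|_{L^p(\R)}^p,
\end{equation*}
so that $\|\ul(t)\|_{L^p(\R)}=\lambda^{1-\frac1p}\|\uud(\lambda^2 t)\|_{L^p(\R)}$. Applying Lemma~\ref{lemma:decay} at time $\lambda^2 t$ and discarding the $+1$ in the favorable direction, namely $(\lambda^2 t+1)^{-\frac12(1-\frac1p)}\le(\lambda^2 t)^{-\frac12(1-\frac1p)}=\lambda^{-(1-\frac1p)}t^{-\frac12(1-\frac1p)}$, I would obtain
\begin{equation*}
\|\ul(t)\|_{L^p(\R)}\le \lambda^{1-\frac1p}\,C\,\lambda^{-(1-\frac1p)}\,t^{-\frac12(1-\frac1p)}=C\,t^{-\frac12(1-\frac1p)}.
\end{equation*}
The powers of $\lambda$ cancel exactly, which is precisely what yields the claimed bound, uniform in $\lambda$, with the same constant $C=C(p,\|u_0\|_{L^1(\R)},\|u_0\|_{L^\infty(\R)})$ as in Lemma~\ref{lemma:decay}.

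For the conservation of mass, the same substitution gives $\int_\R \ul(t,x)\,dx=\int_\R \uud(\lambda^2 t,y)\,dy$, so it suffices to know that $\uud$ conserves mass. This follows from the definition \eqref{u.delta}, since both building blocks preserve the integral: integrating \eqref{eq:burgers} over $\R$ one finds that both terms on the right are exact $x$-derivatives, so $Y^t$ conserves mass, while integrating \eqref{eq:augmented} and using $\int_\R K=1$ gives $\int_\R(K*v-v+v_x)\,dx=0$, so $X^t$ conserves mass as well. Composing these flows over the intervals $I^\Delta_n$ keeps $\int_\R\uud(t)\,dx=\int_\R u_0\,dx$ for all $t\ge0$; together with $\int_\R\ul(0,x)\,dx=\int_\R u_0\,dx$ from the substitution at $t=0$, this yields conservation of the mass of $\ul$.

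There is essentially no hard step here: the argument is a clean scaling computation. The only point that requires care is to apply Lemma~\ref{lemma:decay} to $\uud$, with its fixed datum $u_0$, rather than to $\ul$ and its rescaled datum $\lambda u_0(\lambda\,\cdot)$; it is exactly this choice, combined with the cancellation of the factors $\lambda^{1-\frac1p}$ and $\lambda^{-(1-\frac1p)}$ and the replacement of $(\lambda^2 t+1)$ by $\lambda^2 t$, that makes the final constant independent of $\lambda$.
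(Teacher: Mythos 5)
Your argument is correct and is essentially identical to the paper's proof: the same scaling identity $\|\ul(t)\|_{L^p(\R)}=\lambda^{1-\frac1p}\|\uud(\lambda^2 t)\|_{L^p(\R)}$ combined with Lemma~\ref{lemma:decay} at time $\lambda^2 t$, with the factors of $\lambda$ cancelling, and mass conservation inherited from the flows $X^t$ and $Y^t$. No issues.
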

\begin{proof}
By the definition of $\ul$ and Lemma \ref{lemma:decay}, we have
\begin{equation*}
	\|\ul(t)\|_{L^p(\R)} = \lambda ^{1-\frac 1p}\|\uud(\lambda^2 t)\|_{L^p(\R)} \le C \left(t+\lambda^{-2}\right)^{-\frac 12(1-\frac 1p)} \le C t^{-\frac 12(1-\frac 1p)}.
\end{equation*}
The proof is now finished. The mass conservation follows from the same property of $X^t$ and $Y^t$. 
\end{proof}

In order to prove the compactness of the family $\{\ul \}_{\lambda>0}$, we recall the following compactness argument in \cite{Ignat:2015}:
\begin{theorem}[{\cite[Theorem 1.1]{Ignat:2015}}]\label{rossi-time}
Let $1< p<\infty$ and $\Omega\subset \R^d$ be an open set. Let $\rho:\R^d\rightarrow \R$ be a nonnegative smooth radial function with compact support, non identically zero, and $\rho_n(x)=n^d\rho(nx)$. Let $\{f_n\}_{n\geq 1}$ be a sequence of functions in $L^p((0,T)\times \Omega)$ such that
\begin{equation*}
	\int _0^T \int _{\Omega} |f_n(t,x)|^pdxdt \leq \ {M}
\end{equation*}
and
\begin{equation*}
	n^p\int _0^T\int _{\Omega}\int _{\Omega} \rho_n(x-y) |f_n(t,x)-f_n(t,y)|^pdxdydt\leq {M}.
\end{equation*}
Then, we have:
\begin{enumerate}
	\item If $\{f_n\}_{n\geq 1}$ is weakly convergent in $L^p((0,T)\times \Omega)$ to $f$, then $f\in L^p((0,T),W^{1,p}(\Omega))$ 
		\item   Assuming that $\Omega$ is a smooth bounded domain in $\R^d$, $\rho(x)\geq \rho(y)$ if $|x|\leq |y|$ and that
		\begin{equation*}
			\|\partial_t f_n\|_{L^p((0,T),W^{-1,p}(\Omega))}\leq M,
		\end{equation*}
		then $\{f_n\}_{n\geq 1}$ is relatively compact in $L^p((0,T)\times \Omega)$.
\end{enumerate}
\end{theorem}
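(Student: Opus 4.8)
The plan is to treat the two assertions separately, the common thread being that after the substitution $z=n(x-y)$ the nonlocal energy turns into an average of $L^p$-norms of difference quotients at the scale $1/n$. Writing $\rho_n(x-y)=n^d\rho(n(x-y))$ and setting $y=x-z/n$, the hypothesis becomes
\begin{equation*}
\int_0^T\!\!\int_{\R^d}\rho(z)\,\Big\|\,n\big(f_n(t,\cdot)-f_n(t,\cdot-z/n)\big)\Big\|_{L^p(\Omega_z)}^p\,dz\,dt\le M,
\end{equation*}
where $\Omega_z$ is the set of $x\in\Omega$ with $x-z/n\in\Omega$. This identity is the bridge between the nonlocal quantity and the (local) gradient, and it is exactly in the spirit of the Bourgain--Brezis--Mironescu characterisation of $W^{1,p}$.

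For part (1) I would argue as follows. Fix a direction $z$ in the support of $\rho$ and a test function $\psi\in C_c^\infty((0,T)\times\Omega)$. A discrete integration by parts gives
\begin{equation*}
\int n\big(f_n-f_n(\cdot-z/n)\big)\,\psi=\int f_n\,n\big(\psi-\psi(\cdot+z/n)\big)\longrightarrow-\int f\,(z\cdot\nabla\psi),
\end{equation*}
so the difference quotients converge, in the sense of distributions, to the directional derivative $z\cdot\nabla f$. Since they are bounded in $L^p$ for $\rho$-a.e.\ $z$ (this is precisely the displayed bound) and $p>1$, weak-$L^p$ compactness identifies their limit with an $L^p$ function, whence $z\cdot\nabla f\in L^p$ with $\|z\cdot\nabla f\|_{L^p}\le\liminf_n\|n(f_n-f_n(\cdot-z/n))\|_{L^p}$ by weak lower semicontinuity of the norm. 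Integrating against $\rho(z)\,dz$ and using Fatou's lemma yields $\int\rho(z)\|z\cdot\nabla f\|_{L^p}^p\,dz\le M$. Because $\rho$ is radial and not identically zero, its support spans every direction, so all partial derivatives of $f$ lie in $L^p((0,T)\times\Omega)$; this is $f\in L^p((0,T),W^{1,p}(\Omega))$.

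For part (2) I would run a Fréchet--Kolmogorov argument, reducing relative compactness in $L^p((0,T)\times\Omega)$ to equicontinuity of the translations of $\{f_n\}$ in the time and the space variables (boundedness in $L^p$ being assumed, and tightness being automatic since $\Omega$ is bounded). Equicontinuity in time is standard: the bound on $\partial_t f_n$ in $L^p((0,T),W^{-1,p}(\Omega))$ controls $\|f_n(\cdot+s)-f_n\|$ in the weak norm, and this upgrades to $L^p$ once combined with the spatial control provided by the energy. The heart of the matter is equicontinuity in space, namely a bound
\begin{equation*}
\sup_n\|f_n-\eta_\delta\ast f_n\|_{L^p((0,T)\times\Omega')}\le\omega(\delta),\qquad\omega(\delta)\to0,
\end{equation*}
for a fixed smooth mollifier $\eta_\delta$ and $\Omega'\Subset\Omega$; once this holds, for each fixed $\delta$ the mollified sequence $\{\eta_\delta\ast f_n\}_n$ is bounded in $W^{1,p}$ in space, hence compact, and a diagonal argument closes the proof. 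By Jensen's inequality $\|f_n-\eta_\delta\ast f_n\|_{L^p}^p$ is dominated by an average over $|w|\le\delta$ of $\|f_n-f_n(\cdot-w)\|_{L^p}^p$, so everything reduces to controlling macroscopic translations uniformly in $n$.

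The main obstacle is exactly this last point, and it is where the radial monotonicity of $\rho$ enters. The energy controls translations only at the \emph{microscopic} scale $1/n$ tied to the index, whereas equicontinuity demands control at a fixed scale $\delta$ independent of $n$. I would bridge the gap by telescoping: writing a translation by $w$ as a chain of $K\simeq n|w|$ translations of length $|w|/K\simeq 1/n$, each increment is of the size the energy controls, and the factor $K\simeq n|w|$ produced by the triangle inequality cancels the factor $n^{-1}$ carried by each increment, leaving a bound of order $|w|\le\delta$. To make this rigorous one needs a lower bound on $\rho$ near the origin so that the increments genuinely appear in the energy with a definite weight; this is guaranteed by the hypothesis $\rho(x)\ge\rho(y)$ for $|x|\le|y|$ together with $\rho\not\equiv0$, which forces $\rho\ge c_0>0$ on a ball. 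The remaining technical points — the collapse of the radial variable under the telescoping map, and the boundary effects that obstruct free translation near $\partial\Omega$ — are handled by averaging the estimate over the whole support of $\rho$ rather than fixing one direction, and by working on $\Omega'\Subset\Omega$ and using the smoothness of $\Omega$ to extend where necessary. I expect the careful bookkeeping of these scale-matching estimates, rather than any single inequality, to be the principal difficulty.
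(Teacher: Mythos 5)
The paper does not actually prove this statement: it is quoted verbatim from \cite{Ignat:2015} and used as a black box, so there is no internal proof to compare your attempt against. Judged on its own merits, your outline is the standard (and, as far as I can tell, the cited reference's) route: the substitution $z=n(x-y)$ converting the nonlocal energy into an averaged bound on difference quotients at scale $1/n$, a Bourgain--Brezis--Mironescu identification of the distributional limit of those quotients with $z\cdot\nabla f$ for part (1), and a Riesz--Kolmogorov/Aubin--Lions--Simon argument for part (2). Two points need tightening. In part (1), the parenthetical claim that the displayed bound gives boundedness of $n(f_n-f_n(\cdot-z/n))$ in $L^p$ \emph{for $\rho$-a.e.\ fixed $z$} is not right as stated: for each $n$ you only control the $z$-average, so you must apply Fatou first to conclude $\liminf_n\|n(f_n-f_n(\cdot-z/n))\|_{L^p}<\infty$ for $\rho$-a.e.\ $z$, and then extract a weakly convergent subsequence for each such $z$; the argument survives this permutation of steps. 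In part (2), the fixed-direction telescoping cannot work literally, since all increments in the chain point along $w$ and the energy gives no pointwise-in-$z$ information; the averaged version you allude to is the correct one --- dominate $\|f_n-\eta_\delta\ast f_n\|_{L^p}^p$ by Jensen, telescope inside the $h$-average with $K\simeq n\delta/r_0$ steps, and change variables $z=nh/K$ so that the resulting integral over $\{|z|\le r_0\}$ is controlled by the energy through the lower bound $\rho\ge c_0$ near the origin, giving a bound of order $\delta^pM$ uniform in $n$. That same mollification estimate is also what upgrades the $W^{-1,p}$ time-translation control to $L^p$ (an Ehrling-type inequality that must be \emph{uniform in $n$}), and it is needed once more to keep mass from concentrating near $\partial\Omega$ so that local compactness becomes compactness in $L^p((0,T)\times\Omega)$; these are the genuinely delicate steps and should not be waved through as ``standard'', but your plan for them is sound.
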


We will apply the above theorem with $p=2$ to obtain the local compactness in $L^2_{loc}((0,\infty)\times\R)$. The boundedness of $\{u_\lambda\}_{\lambda>0}$ in $L^2((t_1,t_2)\times\rr)$ follows from Lemma \ref{lemma:uldecay}. We now check the other assumptions in Theorem \ref{rossi-time}. In what follows we denote $J_\lambda (z)=\lambda J(\lambda z)$.

\begin{lemma}\label{lemma:jlambda}
There exists a positive constant $C=C(J,\|u_0\|_{L^1(\rr)}, \|u_0\|_{L^\infty(\rr)})$ such that 	for any $0<t_1<t_2<\infty$,  the following estimate
	\begin{equation*}
		\lambda^2 \int_{t_1}^{t_2} \int_\R \int_\R J_\lambda(x-y)(\ul(t,x)-\ul(t,y))^2dxdydt \le C t_1^{-\frac12}
	\end{equation*}
	 holds for all $\lambda>0$.
\end{lemma}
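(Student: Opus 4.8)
The plan is to reproduce, directly for the rescaled solution $\ul$, the $L^2$ energy computation that led to \eqref{ineg.503}, and then to integrate the resulting differential inequality over the time window $[t_1,t_2]$. First I would multiply equation \eqref{eq:ulambda} by $\ul$ and integrate over $\R$. The convective term $2\psil\ul\ul_x$ and the transport term $2\lambda(1-\psil)\ul_x$ are, after multiplication by $\ul$, perfect $x$-derivatives of $\ul^3$ and $\ul^2$ respectively, so they integrate to zero; only the diffusive and the nonlocal contributions survive, giving
\begin{equation*}
	\frac12\frac{d}{dt}\|\ul(t)\|_{L^2(\R)}^2 = -2\psil(t)\|\ul_x(t)\|_{L^2(\R)}^2 + 2\lambda^2(1-\psil(t))\int_\R \ul\,(K_\lambda*\ul-\ul)\,dx.
\end{equation*}

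The next step is to control both surviving terms by the single nonlocal Dirichlet form $\int_\R\int_\R J_\lambda(x-y)(\ul(x)-\ul(y))^2\,dxdy$. For the diffusion term I would use the rescaled version of \eqref{est.J.grad}: since $\int_\R J_\lambda(z)z^2\,dz = \lambda^{-2}\int_\R J(z)z^2\,dz = 2\lambda^{-2}$, that inequality reads $\int_\R\int_\R J_\lambda(x-y)(\varphi(x)-\varphi(y))^2\,dxdy \le 2\lambda^{-2}\|\varphi_x\|_{L^2(\R)}^2$, so that $-2\psil\|\ul_x\|_{L^2}^2 \le -\psil\,\lambda^2\int_\R\int_\R J_\lambda(x-y)(\ul(x)-\ul(y))^2\,dxdy$. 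For the nonlocal term I would apply the symmetrization identity \eqref{simetrizare} with $K$ replaced by $K_\lambda$, whose even part is exactly $J_\lambda$ (indeed $\tfrac12(K_\lambda(z)+K_\lambda(-z))=J_\lambda(z)$); this turns $2\lambda^2(1-\psil)\int_\R\ul(K_\lambda*\ul-\ul)\,dx$ into $-\lambda^2(1-\psil)\int_\R\int_\R J_\lambda(x-y)(\ul(x)-\ul(y))^2\,dxdy$. The pleasant point is that the weights $\psil$ and $1-\psil$ multiply identical dissipation terms and thus add up to one, yielding the clean bound
\begin{equation*}
	\frac{d}{dt}\|\ul(t)\|_{L^2(\R)}^2 \le -2\lambda^2\int_\R\int_\R J_\lambda(x-y)\,(\ul(t,x)-\ul(t,y))^2\,dxdy.
\end{equation*}

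Finally I would integrate this over $[t_1,t_2]$; the left-hand side telescopes to $\|\ul(t_2)\|_{L^2}^2-\|\ul(t_1)\|_{L^2}^2$, and after discarding the nonnegative term at $t_2$ this gives
\begin{equation*}
	2\lambda^2\int_{t_1}^{t_2}\int_\R\int_\R J_\lambda(x-y)\,(\ul(t,x)-\ul(t,y))^2\,dxdy\,dt \le \|\ul(t_1)\|_{L^2(\R)}^2.
\end{equation*}
Invoking Lemma \ref{lemma:uldecay} with $p=2$, which gives $\|\ul(t_1)\|_{L^2(\R)}^2 \le C\,t_1^{-1/2}$ with $C=C(J,\|u_0\|_{L^1},\|u_0\|_{L^\infty})$, produces exactly the claimed estimate.

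I expect the main obstacle to be the careful bookkeeping of the $\lambda$-scalings, in particular verifying that the even part of $K_\lambda$ is $J_\lambda$ and that its second moment equals $2\lambda^{-2}$, so that the prefactor $\lambda^2$ in the statement emerges identically from both the diffusive and the nonlocal contributions and the two characteristic-function weights combine to one. An equivalent route, if one prefers not to redo the energy estimate, is to start from \eqref{ineg.503} for $\uud$ with $p=2$ and change variables via $s=\lambda^2 t$, $\xi=\lambda x$ in \eqref{eq:ul}; this yields the same telescoped bound in terms of $\|\uud(\lambda^2 t_1)\|_{L^2}^2$, to which one applies the decay in Lemma \ref{lemma:decay}.
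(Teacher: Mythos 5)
Your proposal is correct and follows essentially the same route as the paper: multiply \eqref{eq:ulambda} by $\ul$, use the symmetrization identity \eqref{simetrizare} for the nonlocal term and the rescaled inequality \eqref{est.J.grad} to dominate the gradient term by the $J_\lambda$-Dirichlet form, integrate in time, and invoke the $L^2$ decay of Lemma \ref{lemma:uldecay}. The only cosmetic difference is that the paper first integrates the energy identity keeping the two weighted dissipation terms separate (estimate \eqref{est.energetica}) and converts the gradient term afterwards, whereas you combine the weights $\psil$ and $1-\psil$ before integrating; the bookkeeping of the $\lambda$-scalings you flag is exactly as you describe.
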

\begin{proof}
Let us multiply \eqref{eq:ulambda} by $\ul$ and integrate it on space. Using \eqref{simetrizare} we get
\begin{align*}
	\frac 14\frac{d}{dt} \int_\R |\ul |^2 dx&=-\psil(t)\int_\R |\ul_x |^2dx+\lambda^2(1-\psil(t))\int_\R (K_\lambda*\ul-\ul)\ul dx \\
	&=-\psil(t)\int_\R |\ul_x |^2dx-\frac{\lambda^2}{2}(1-\psil(t))\int_\R\int_\R J_\lambda(x-y) (\ul(x)-\ul(y))^2 dxdy.
\end{align*}
Integrating on $(t_1,t_2)$ the above inequality and using the decay of the $L^2(\rr)$-norm of $u^\lambda$ we obtain 
\begin{align}\label{est.energetica}
	\int_{t_1}^{t_2} \psil(t)\int_\R & |\ul_x |^2dxdt+\lambda^2\int_{t_1}^{t_2}(1-\psil(t))\int_\R\int_\R  J_\lambda(x-y) (\ul(x)-\ul(y))^2 dxdydt\\
	\nonumber&\lesssim \|\ul(t_1)\|_{L^2(\R)}^2-\|\ul(t_2)\|_{L^2(\R)}^2 \leq  C(\|u_0\|_{L^1(\rr)}, \|u_0\|_{L^\infty(\rr)}) t_1^{-\frac12}.
\end{align}
By \eqref{est.J.grad} and a change of variables, we also have
\begin{equation*}
	\lambda^2 \int_\R\int_\R J_\lambda(x-y) (\ul(x)-\ul(y))^2 dxdy \leq C(J) \int_\R |\ul_x (x)|^2 dx.
\end{equation*}
We introduce this inequality in \eqref{est.energetica} to obtain the desired result.
%
\end{proof}

We now prove that family $\{u_\lambda\}_{\lambda>0}$ satisfies the last assumption in Theorem \ref{rossi-time}.
\begin{lemma}\label{lemma:h-1}
	There exists a positive constant $C=C(J,\|u_0\|_{L^1(\rr)}, \|u_0\|_{L^\infty(\rr)})$ such that for any $0<t_1<t_2<\infty$ the 
	 following 
	\begin{equation*}
		\|\ul_t\|_{L^2((t_1,t_2),H^{-1}(\R))}\le C t_1^{-\frac12}
	\end{equation*}
	holds uniformly for all $\lambda>1$.
\end{lemma}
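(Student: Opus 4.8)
I need to bound the $H^{-1}$-norm of $\ul_t$ over $(t_1,t_2)$, uniformly in $\lambda>1$, by $Ct_1^{-1/2}$. The natural strategy is to use the evolution equation \eqref{eq:ulambda} directly: test $\ul_t$ against an arbitrary $\varphi\in H^1(\R)$ with $\|\varphi\|_{H^1}\le 1$, move derivatives off $\ul$ and onto $\varphi$ where necessary, and control each resulting term in $L^2$ in space-time by quantities already estimated in Lemmas \ref{lemma:uldecay} and \ref{lemma:jlambda}. Let me sketch this.

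**The plan.** First I would write, for a test function $\varphi$,
\[
\langle \ul_t,\varphi\rangle = -2\psil(t)\int_\R \ul_x\varphi_x\,dx + 2\psil(t)\int_\R \ul\ul_x\varphi\,dx + 2(1-\psil(t))\int_\R(\lambda^2 K_\lambda*\ul-\lambda^2\ul+\lambda\ul_x)\varphi\,dx,
\]
where in the first term I have integrated by parts to pass the second derivative in $\ul_{xx}$ onto $\varphi$. Now I estimate the three groups separately. The diffusion term is bounded by $2\|\ul_x\|_{L^2}\|\varphi_x\|_{L^2}\le 2\|\ul_x\|_{L^2}$, and its space-time $L^2$-norm is controlled by $\big(\int_{t_1}^{t_2}\psil(t)\|\ul_x\|_{L^2}^2\,dt\big)^{1/2}\lesssim t_1^{-1/4}$ via \eqref{est.energetica}. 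The Burgers term I would rewrite as $\psil(t)\int_\R \big((\ul)^2\big)_x\varphi\,dx = -\psil(t)\int_\R(\ul)^2\varphi_x\,dx$, so it is bounded by $\|\ul\|_{L^4}^2\|\varphi_x\|_{L^2}\le\|\ul\|_{L^4}^2$; by Lemma \ref{lemma:uldecay} with $p=4$ this gives $\|\ul(t)\|_{L^4}^2\lesssim t^{-3/4}$, whose square root is integrable in $L^2(t_1,t_2)$ and yields a bound $\lesssim t_1^{-1/4}$.

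**The nonlocal term.** The term I expect to require the most care is the nonlocal one, $2(1-\psil(t))\int_\R(\lambda^2 K_\lambda*\ul-\lambda^2\ul+\lambda\ul_x)\varphi\,dx$. The key identity is the symmetrization already used in \eqref{simetrizare}: splitting $K_\lambda$ into its even part $J_\lambda$ and odd part, the even part contributes a term controlled by the quadratic form of Lemma \ref{lemma:jlambda}, while the odd part combines with the transport term $\lambda\ul_x$ to leave only a benign remainder. Concretely, I would write
\[
\lambda^2\!\int_\R(K_\lambda*\ul-\ul)\varphi\,dx = -\frac{\lambda^2}{2}\int_\R\!\int_\R J_\lambda(x-y)(\ul(x)-\ul(y))(\varphi(x)-\varphi(y))\,dxdy + \lambda^2\!\int_\R(\tilde K_\lambda*\ul)\varphi\,dx,
\]
where $\tilde K_\lambda$ is the odd part, and then bound the symmetric double integral by Cauchy--Schwarz as $\big(\lambda^2\iint J_\lambda(\ul(x)-\ul(y))^2\big)^{1/2}\big(\lambda^2\iint J_\lambda(\varphi(x)-\varphi(y))^2\big)^{1/2}$; the second factor is $\lesssim\|\varphi_x\|_{L^2}\le 1$ by \eqref{est.J.grad} after a change of variables, and the first factor integrates in time to $\lesssim t_1^{-1/4}$ by Lemma \ref{lemma:jlambda}. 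The odd part together with $\lambda\ul_x$ must be shown to form a term whose coefficient stays bounded for $\lambda>1$; this is where the condition \eqref{cond.K} that $K$ has first moment one is used, since it forces a cancellation between the first-order part of the convolution and the transport $\ul_x$. The main obstacle is tracking the powers of $\lambda$ in this last grouping and confirming that no positive power of $\lambda$ survives, so that the estimate is uniform for $\lambda>1$; once that cancellation is verified, I would collect the three bounds, each $\lesssim t_1^{-1/4}$, take the supremum over $\|\varphi\|_{H^1}\le 1$, and square-and-integrate to obtain $\|\ul_t\|_{L^2((t_1,t_2),H^{-1})}\lesssim t_1^{-1/4}$, which I would then absorb into the stated $Ct_1^{-1/2}$ bound on the relevant range (or sharpen the exponents as needed to match the claim).
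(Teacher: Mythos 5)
Your overall skeleton---duality against $\varphi\in H^1(\R)$ with $\|\varphi\|_{H^1}\le 1$, integration by parts for the diffusion and Burgers terms, and control of the resulting space--time integrals via the energy identity \eqref{est.energetica} and the $L^4$ decay from Lemma \ref{lemma:uldecay}---is exactly the paper's. Your treatment of the even part of the kernel (polarized symmetrization plus Cauchy--Schwarz, with the $\varphi$-factor absorbed through \eqref{est.J.grad} and the $\ul$-factor through Lemma \ref{lemma:jlambda}) is correct, and the exponent $t_1^{-1/4}$ you land on is what the paper's own computation actually yields (its final display bounds the \emph{square} of the norm by $t_1^{-1/2}$), so that bookkeeping is consistent.

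The gap is precisely at the point you flag as ``the main obstacle'' and then do not resolve: the contribution of $2(1-\psil)\big(\lambda^2 A_\lambda*\ul+\lambda\ul_x\big)$, where $A_\lambda(z)=\tfrac12\big(K_\lambda(z)-K_\lambda(-z)\big)$ is the odd part. Two problems. First, the target you announce---``a term whose coefficient stays bounded for $\lambda>1$''---is not the right one and would not suffice: in Fourier variables the combined symbol is $\lambda^2\widehat{A_\lambda}(\xi)+\lambda i\xi=i\lambda\xi^3/(\lambda^2+\xi^2)$, which at $\xi\sim\lambda$ is of size $\lambda^2$ while $\sqrt{1+\xi^2}\sim\lambda$, so there is \emph{no} $\lambda$-uniform bound of this operator from $L^2(\R)$ to $H^{-1}(\R)$; mere boundedness of coefficients cannot be the conclusion. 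What must be proved is that this term is dominated by the same quantity as the even part, namely $\big(\tfrac{\lambda^2}{2}\iint J_\lambda(x-y)(\ul(x)-\ul(y))^2\,dx\,dy\big)^{1/2}\|\varphi\|_{H^1(\R)}$, so that Lemma \ref{lemma:jlambda} (equivalently \eqref{est.energetica}) applies after time integration. Second, this is exactly the estimate you leave unverified. It does hold, by Plancherel and Cauchy--Schwarz, since
$\big|i\lambda\xi^3/(\lambda^2+\xi^2)\big|\le \big(\lambda^2\xi^2/(\lambda^2+\xi^2)\big)^{1/2}|\xi|=\lambda\big(1-\widehat{J_\lambda}(\xi)\big)^{1/2}|\xi|$,
and $\int(1-\widehat{J_\lambda})|\widehat{\ul}|^2$ is the $J_\lambda$ quadratic form. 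The paper sidesteps your even/odd split entirely: it keeps $\lambda^2(K_\lambda*\ul-\ul)+\lambda\ul_x$ as a single operator with symbol $-\xi^2/(1+i\xi/\lambda)$, whose modulus equals $\lambda\xi^2/\sqrt{\lambda^2+\xi^2}\le\lambda(1-\widehat{J_\lambda}(\xi))^{1/2}|\xi|$, and concludes in one line. So your decomposition can be completed, but as written the central uniform-in-$\lambda$ estimate of the lemma is missing, and the stated form of the needed cancellation is wrong.
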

\begin{proof}
Let us consider $\phi\in C^\infty_c(\R)$. Then
\begin{align*}
	\la \ul_t,\phi \ra_{-1,1} &= \Big\la 2\psil \ul_{xx} + 2\psil\ul\ul_x + 2(1-\psil)\left(\lambda^2\left(K_\lambda*\ul-\ul\right)+\lambda\ul_x\right),\phi\Big\ra _{-1,1} \\
	&= -2\psil \int_\R \ul_x \phi_x dx -\psil \int_\R (\ul)^2\phi_x dx \\
	&\quad \qquad + 2(1-\psil)\int_\R \left(\lambda^2 (K_\lambda*\ul-\ul )+\lambda\ul_x\right) \phi dx.
\end{align*}
Hence 
\begin{align*}
	|\la \ul_t ,\phi\ra _{-1,1} | &\lesssim \psil (t) \|\ul_x(t)\|_{L^2(\R)} \|\phi\|_{H^1(\R)} +\psil (t) \|(\ul)^2 (t)\|_{L^2(\R)} \|\phi\|_{H^1(\R)}\\
	&\quad + (1-\psil (t))\Big| \int _{\R} \Big( \lambda^2(K_\lambda\ast \ul -\ul)+\lambda \ul_x \Big)\phi dx\Big|.
\end{align*}
Let us now estimate the last term in the right hand side of the above inequality. Let us first recall that the Fourier transforms of $K$ and $J$ are given by
\begin{equation*}
	\widehat K(\xi)=\frac1{1+i\xi} \quad\mbox{ and }\quad \widehat J(\xi)=\frac{1}{1+\xi^2}.
\end{equation*}
The rescaled functions $K_\lambda$ and $J_\lambda$ satisfy
\begin{equation*}
	\widehat K_\lambda(\xi)=\frac1{1+i\xi/\lambda}  \quad\mbox{ and }\quad \widehat J(\xi)=\frac{1}{1+(\xi/\lambda)^2}.
\end{equation*}
Thus
\begin{align*}
	\bigg| \int_\R \Big(\lambda^2&\big(K_\lambda*\ul-\ul\big)+\lambda\ul_x\Big) \phi dx\bigg| \leq \int_\R \Big |\left(\lambda^2 (\widehat K_\lambda (\xi)-1)+\lambda i\xi\right)\widehat{\ul}(\xi)\overline{\widehat\phi(\xi)}\,d\xi \Big|\\
	&\leq \int _{\R} \frac{\lambda \xi^2}{\sqrt{\lambda^2 +\xi^2}}|\widehat{\ul}(\xi)||\widehat\phi(\xi)|\,d\xi \leq \left(\int _{\R} \frac{\lambda^2 \xi^2}{\lambda^2+\xi^2}|\widehat{\ul}(\xi)|^2 d\xi\right)^{1/2} \|\phi \|_{H^1(\R)}\\
	&= \left(\int _{\R} \big(1-\widehat J_\lambda(\xi)\big) |\widehat{\ul}(\xi)|^2 d\xi\right)^{1/2}\|\phi \|_{H^1(\R)} \\
	&= \left( \frac{\lambda^2}2 \int _{\R} J_\lambda (x-y)\big(\ul(x)-\ul (y)\big)^2dxdy \right)^{1/2}\|\phi \|_{H^1(\R)}.
\end{align*}
Hence
\begin{align*}
	|\la \ul_t ,\phi\ra _{-1,1} | &\lesssim \psil (t) \|\phi\|_{H^1(\R)} ( \|\ul_x(t)\|_{L^2(\R)}+ \|\ul (t)\|_{L^4(\R)}^2 )\\
	&\quad + (1-\psil (t))\Big( \frac{\lambda^2}2 \int _{\R} J_\lambda (x-y)\big(\ul(x)-\ul (y)\big)^2dxdy \Big)^{1/2}\|\phi \|_{H^1(\R)}.
\end{align*}
Using the $L^4(\rr)$-decay of $u_\lambda$ in Lemma \ref{lemma:uldecay} we have
\begin{align*}
	\|\ul _t(t)\|_{H^{-1}(\R)}& \lesssim \psil (t) \big(\|\ul_x(t)\|_{L^2(\R)} +  t^{-3/4}\big)\\
	&\quad + (1-\psil (t))\Big( \frac{\lambda^2}2 \int _{\R} J_\lambda (x-y)\big(\ul(x)-\ul (y)\big)^2dxdy \Big)^{1/2}.
\end{align*}
Integrating on time the above inequality we get
\begin{align*}
	\int _{t_1}^{t_2} \| \ul _t(t)\|_{H^{-1}(\R) }^2dt& \lesssim {t_1^{-\frac12}}+\int _{t_1}^{t_2}\psil (t) \int _{\R} |\ul_x (t)|^2dt\\
	&\quad + \frac{\lambda^2}2 \int _{t_1}^{t_2}(1-\psil (t)) \int _{\R} J_\lambda (x-y)\big(\ul(x)-\ul (y)\big)^2dxdy.
\end{align*}
Using now estimate \eqref{est.energetica} we conclude that 
\begin{equation*}
	\int _{t_1}^{t_2} \| \ul _t(t)\|_{H^{-1}(\R) }^2dt \lesssim {t_1^{-\frac12}},
\end{equation*}
which finishes the proof.
\end{proof}

Let us now choose a smooth compactly supported function $\rho\leq J$ as required in Theorem \ref{rossi-time}. The previous estimates obtained in Lemma \ref{lemma:uldecay}, Lemma \ref{lemma:jlambda}, Lemma \ref{lemma:h-1} and Theorem \ref{rossi-time} show that the family $\{u_\lambda\}_{\lambda>0}$ is relatively compact in $L^2_{loc}((0,\infty)\times \R)$ and so in $L^1_{loc}((0,\infty)\times \R)$. In order to prove the compactness in $L^1_{loc}((0,\infty),L^1(\R))$ we need to uniformly control the tails. Before proving that, we need the following comparison result.
\begin{lemma}\label{compar}
Let $u_0\leq v_0$ be two functions in $L^1(\R)\cap L^\infty(\R)$. Then, the solutions $\ul$ and $v^\lambda$ of system \eqref{eq:ulambda} with initial data $u_0$ and $v_0$, respectively, satisfy
\begin{equation*}
	\ul(t,x)\leq v^\lambda(t,x).
\end{equation*}
\end{lemma}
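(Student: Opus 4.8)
The plan is to reduce the comparison for the rescaled system \eqref{eq:ulambda} to separate order-preservation properties of its two constituent flows, exploiting the fact that $\ul$ is merely an order-preserving rescaling of the function $\uud$ defined in \eqref{u.delta}, which in turn is built by alternately composing the flows $X^t$ and $Y^t$.

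First I would note that it suffices to prove the comparison at the level of the unscaled function $\uud$. Indeed, the rescaling $f\mapsto \lambda f(\lambda\,\cdot)$ is monotone, so $u_0\le v_0$ gives $\lambda u_0(\lambda x)\le \lambda v_0(\lambda x)$; since $\ul(t,x)=\lambda\uud(\lambda^2 t,\lambda x)$ and the analogously defined $v^\lambda(t,x)=\lambda v^\Delta(\lambda^2 t,\lambda x)$, the inequality $\ul\le v^\lambda$ is equivalent to $\uud\le v^\Delta$ for ordered unscaled data. One may equally argue directly with \eqref{eq:ulambda}, since the rescaled flows appearing on the subintervals $I^\Delta_n$ and on their complement enjoy exactly the same monotonicity, so the argument below is unchanged.

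Next I would establish that each of the two flows is order preserving. For $Y^t$ this is the classical comparison principle for the uniformly parabolic viscous Burgers equation \eqref{eq:burgers}, valid for the bounded data under consideration. For $X^t$, recall that $X^t v_0=D_t\ast v_0$, and that by the constant-coefficient (Fourier multiplier) structure the generator $v\mapsto K\ast v-v+v_x$ splits as the commuting sum of the pure-jump operator $v\mapsto K\ast v-v$ and the transport operator $\partial_x$. Hence $X^t=e^{t(K\ast-I)}\circ e^{t\partial_x}$, where the first factor equals $e^{-t}\sum_{n\ge0}\frac{t^n}{n!}K^{\ast n}$ and is nonnegativity preserving because $K(z)=e^{-z}\chi_{(0,\infty)}\ge0$, while the second is a pure translation. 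Thus $X^t$ maps nonnegative data to nonnegative data, which by linearity is precisely order preservation.

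Finally, since ordering is preserved by each individual flow and by the rescaling, it is preserved under the alternating composition that produces $\uud$: on $[t_n,t_{n+1/2})$ the two solutions evolve by the same operator $Y^{2(t-t_n)}$, and on $[t_{n+1/2},t_{n+1})$ by the same $X^{2(t-t_{n+1/2})}$, so an inequality valid at $t_n$ propagates through both half-steps to $t_{n+1}$; iterating in $n$ and passing back through the rescaling yields $\ul(t,x)\le v^\lambda(t,x)$ for all $t>0$. The main obstacle is the positivity preservation of the nonlocal flow $X^t$, whose cleanest justification is the commuting splitting of its generator just described. A subtlety worth flagging is that one should first carry out the argument for smooth, rapidly decaying data and then extend it to general $L^1(\R)\cap L^\infty(\R)$ data by the $L^1$-contraction of the flows together with a density argument, so that every manipulation remains rigorous.
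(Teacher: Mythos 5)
Your proof is correct, but it takes a genuinely different route from the paper's. The paper argues directly on the PDE \eqref{eq:ulambda}: it sets $w^\lambda=\ul-v^\lambda$, multiplies the equation satisfied by $w^\lambda$ by $\sgn^+(w^\lambda)$ and integrates, showing (via classical density arguments) that the diffusion, flux and transport terms contribute nonpositively or vanish, and that the nonlocal term satisfies $\int_\R(K_\lambda\ast w^\lambda-w^\lambda)\sgn^+(w^\lambda)\,dx\le 0$ because $K\ge 0$ has mass one; hence $\frac{d}{dt}\int_\R(w^\lambda)^+\,dx\le 0$ and $(w^\lambda)^+\equiv 0$. You instead exploit the splitting structure: since $\ul$ is an order-preserving rescaling of $u^\Delta$, which is an alternating composition of $Y^t$ and $X^t$, it suffices that each constituent flow preserves order --- $Y^t$ by the parabolic comparison principle, and $X^t$ because $D_t$ is convolution with a nonnegative unit-mass measure, as your factorization $e^{-t}\sum_{n\ge 0}\frac{t^n}{n!}K^{\ast n}$ composed with a translation makes explicit. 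Your argument is more elementary and sidesteps the Kato-type inequality technicalities, at the price of invoking the identification of the solution of \eqref{eq:ulambda} with the rescaled composed flows (which is exactly how the paper constructs $\ul$, so this is harmless); the paper's computation has the advantage of applying to any solution of \eqref{eq:ulambda} as such, without reference to the flow decomposition, and in both arguments the essential input is the same, namely the nonnegativity and unit mass of the kernel.
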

\begin{proof}
Let us set $w^\lambda=\ul-v^\lambda$. From \eqref{eq:ulambda} we obtain the equation satisfied by $w^\lambda$ and multiply it by $\sgn^+(w^\lambda)$, where function $\sgn^+(z)=1$ if $z$ is positive and vanishes otherwise. Then the positive part of $w$ satisfies $(w^\lambda)^+(0)=0$ and
\begin{align*}
	\frac{d}{dt}\int_{\R} (w^\lambda)^+dx=& 2\psi^{\frac{\dt}{\lambda^2}} (t) \int _{\R}w^\lambda_{xx}\sgn^+(w^\lambda)dx+\psi^{\frac{\dt}{\lambda^2}}(t) \int _{\R} \big((\ul)^2-(v^\lambda)^2\big)_x\sgn^+(w^\lambda)dx\\
	&+2\big(1-\psi^{\frac{\dt}{\lambda^2}}(t)\big)\int _{\R}(\lambda^2K_\lambda\ast w^\lambda-\lambda^2w^\lambda+\lambda w^\lambda_x)\sgn^+(w^\lambda)dx.
\end{align*}
Classical density arguments (see, for example, \cite{Escobedo:1991}) show that
\begin{equation*}
	\int _{\R}w^\lambda_{xx}\sgn^+(w^\lambda)\leq 0,\quad \int _{\R} \big((\ul)^2-(v^\lambda)^2\big)_x \sgn^+(w^\lambda)dx=0, \quad \int _{\R} w^\lambda_x\sgn^+(w^\lambda)dx=0.
\end{equation*}
Let us now estimate the last term. We have
\begin{align*}
	\int _{\R} (K_\lambda\ast w^\lambda-w^\lambda) \sgn^+(w^\lambda)dx &=\int_{\R} \int _{\R} K_\lambda(x-y)w^\lambda(y)\sgn^+ (w^\lambda(x))dxdy-\int _{\R}(w^\lambda)^+ dx\\
	&\leq \int_{\R} \int_{\R} K_\lambda(x-y)(w^\lambda)^+(y)dxdy-\int _{\R}(w^\lambda)^+dx \\
	&=\int _{\R}(w^\lambda)^+(y)\int _{\R}K_\lambda(x-y)dxdy-\int _{\R}(w^\lambda)^+dx=0.
\end{align*}
This finishes the proof.
\end{proof}

To conclude this subsection, let us show that the tails are uniformly small when $\lambda\ge1$, a sufficient condition to prove, afterwards, that $\{\ul\}_{\lambda>0}$ converges in $L^1_{loc}((0,\infty),L^1(\R))$.

\begin{lemma}\label{lemma:tails}
For any  $u_0\in L^1(\R)\cap L^\infty(\R)$ there exists a positive constant $C=C(\|u_0\|_{L^1(\R)}, \|u_0\|_{L^\infty(\R)} )$ such that, for any $\lambda\ge1$ and any $r>0$, the following holds:
\begin{equation*}\label{eq:tails}
	\int _{|x|>2r} |\ul (t,x)|dx\leq \int _{|x|>r} |u_0(x)|dx+ C\left(\frac {\sqrt{t}}{r} +\frac{t}{r^2}\right),
\end{equation*}
where $\ul$ is the solution of system \eqref{eq:ulambda} with initial data $u_0^\lambda(\cdot)=\lambda u_0(\lambda \cdot)$.
\end{lemma}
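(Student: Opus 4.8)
The plan is to control the tails of $\ul$ by multiplying the rescaled equation \eqref{eq:ulambda} by a smooth cutoff that vanishes near the origin and equals one for $|x|>2r$, then integrate in both space and time. Specifically, I would fix a smooth function $\varphi=\varphi_r$ with $0\le\varphi\le1$, $\varphi(x)=0$ for $|x|\le r$, $\varphi(x)=1$ for $|x|\ge 2r$, and satisfying the natural scaling bounds $\|\varphi_x\|_{L^\infty}\lesssim r^{-1}$ and $\|\varphi_{xx}\|_{L^\infty}\lesssim r^{-2}$. By Lemma~\ref{compar} and the decomposition $u_0=u_0^+-u_0^-$, it suffices to bound $\int_\R \ul(t,x)\varphi(x)\,dx$ for $u_0\ge 0$, since then $\ul\ge 0$ and $\int_{|x|>2r}|\ul|\le\int_\R\ul\varphi$. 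So first I would reduce to the nonnegative case via the comparison principle.

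**Next I would** differentiate $t\mapsto\int_\R \ul(t,x)\varphi(x)\,dx$ in time and substitute the three terms of \eqref{eq:ulambda}. For the diffusion term $2\psil\int \ul_{xx}\varphi$, two integrations by parts move the derivatives onto $\varphi$, yielding a term bounded by $\|\varphi_{xx}\|_{L^\infty}\|\ul(t)\|_{L^1}\lesssim r^{-2}$ using mass conservation from Lemma~\ref{lemma:uldecay}. For the Burgers term $\psil\int((\ul)^2)_x\varphi$, one integration by parts gives $-\psil\int(\ul)^2\varphi_x$, bounded by $\|\varphi_x\|_{L^\infty}\|\ul(t)\|_{L^2}^2\lesssim r^{-1}t^{-1/2}$ via the $L^2$-decay. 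The genuinely nonlocal drift term $2(1-\psil)\int(\lambda^2K_\lambda*\ul-\lambda^2\ul+\lambda\ul_x)\varphi$ is the delicate piece: here I would write $\int(\lambda^2 K_\lambda*\ul-\lambda^2\ul+\lambda\ul_x)\varphi = \int \ul\,(\lambda^2\widetilde K_\lambda*\varphi-\lambda^2\varphi-\lambda\varphi_x)$, where $\widetilde K_\lambda(z)=K_\lambda(-z)$, and then exploit the two moment normalizations \eqref{cond.K}. A Taylor expansion of $\varphi$ inside the convolution shows that the mass-one and first-moment-one conditions make the zeroth- and first-order terms cancel exactly against $-\lambda^2\varphi$ and $-\lambda\varphi_x$, leaving a remainder controlled by the second moment of $K_\lambda$ times $\|\varphi_{xx}\|_{L^\infty}$, which is $\lesssim r^{-2}$ uniformly in $\lambda\ge1$.

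**The main obstacle** will be making the cancellation in the nonlocal term rigorous and, in particular, checking that the contribution is uniform in $\lambda\ge 1$ rather than blowing up with the factors $\lambda^2$ and $\lambda$. The point is that $K_\lambda$ concentrates as $\lambda\to\infty$ while picking up the $\lambda^2$ prefactor, so the expansion $\lambda^2\big(\widetilde K_\lambda*\varphi-\varphi\big)+\lambda\varphi_x \to \tfrac12 M_2\varphi_{xx}$ as $\lambda\to\infty$ must be shown to be bounded by $C\|\varphi_{xx}\|_{L^\infty}$ for \emph{all} $\lambda\ge1$, using only that $K$ has finite second moment and the first two moments equal one. I would control the Taylor remainder by $\int K_\lambda(z)z^2\,dz\cdot\sup|\varphi_{xx}| = \lambda^{-2}\,M_2\,\sup|\varphi_{xx}|$, and then the $\lambda^2$ prefactor cancels the $\lambda^{-2}$, giving the uniform $r^{-2}$ bound.

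**Finally I would** integrate the resulting differential inequality in time over $[0,t]$. Since $\psil$ and $1-\psil$ are bounded by one, the three contributions integrate to give $\int_\R\ul(t)\varphi\le\int_\R u_0^\lambda\varphi + C(r^{-1}\sqrt{t}+r^{-2}t)$. Undoing the rescaling $u_0^\lambda(\cdot)=\lambda u_0(\lambda\cdot)$ and using that $\varphi$ vanishes on $|x|\le r$ gives $\int_\R u_0^\lambda\varphi\le\int_{|x|>r}|u_0|$, since $\int_{|y|>\lambda r}|u_0|\le\int_{|y|>r}|u_0|$ for $\lambda\ge1$. Collecting the estimates yields exactly the claimed bound. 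I expect no further difficulty beyond the uniform-in-$\lambda$ control of the nonlocal term described above; the diffusion and Burgers contributions are routine once the cutoff and the decay estimates from Lemma~\ref{lemma:uldecay} are in place.
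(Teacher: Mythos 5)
Your proposal follows essentially the same route as the paper: reduction to nonnegative data via the comparison principle of Lemma~\ref{compar}, testing against a cutoff $\rho(\cdot/r)$, bounding the diffusion and Burgers contributions by $t/r^2$ and $\sqrt t/r$ through Lemma~\ref{lemma:uldecay}, and controlling the nonlocal drift term uniformly in $\lambda\ge1$ via the moment normalizations \eqref{cond.K} (the paper delegates that last estimate to a citation, while you spell out the Taylor expansion with the second-moment remainder --- the same underlying computation). One small caution: since the equation is nonlinear, the reduction to nonnegative data cannot literally go through the superposition $u_0=u_0^+-u_0^-$; instead, as in the paper, one sandwiches $u^\lambda$ between the solutions with data $\pm|u_0^\lambda|$ and notes that the sign-flipped solution satisfies the equation with $uu_x$ replaced by $-uu_x$, which does not affect any of the estimates.
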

\begin{proof}
In order to prove the above estimate we first remark that it is sufficient to consider positive solutions. The maximum principle given in Lemma \ref{compar} shows that $v\leq \ul\leq w$ where $v$ and $w$ are solutions of \eqref{eq:ulambda} with initial data $-|u_0^\lambda|$ and $|u_0^\lambda|$ respectively. For $w$ we need estimates for nonnegative solutions whereas for $v$ we have to observe that $-v$ is the solution of the equation with nonnegative initial data similar to \eqref{eq:ulambda} but replacing the term $uu_x$ by $-uu_x$. This change does not affect the estimates in Lemma \ref{lemma:tails}. Then $|u_\lambda|\leq \max \{|v|,w\}$. Taking into account this argument, it is enough to prove the desired estimate only for nonnegative solutions.

For every $r>0$, let us define $\rho_r(z)=\rho(z/r)$, where $\rho$ is a nonnegative $C^\infty$ function such that $0\leq \rho\leq 1$ and
\begin{equation*}
	\rho(z)=\begin{cases} 0,&|z|<1,\\1,&|z|>2. \end{cases}
\end{equation*}
Let us assume that $u_\lambda$ is a nonnegative solution. We multiply \eqref{eq:ulambda} by $\rho_r$ and integrate it over $(0,t)\times\R$. We obtain:
\begin{align*}
	\int_0^t\int_\R \ul_s (s) \rho_r dxds&=-\int_0^t\psil(s) \int_\R \big(\ul (s)\big)^2 \rho_r' dxds + 2\int_0^t \psil(s) \int_\R \ul (s) \rho_r'' dxds\\
	&\quad\quad+2\int_0^t \big(1-\psil(s)\big) \int_\R \Big(\lambda^2\big(K_{\lambda}*\ul(s)- \ul(s)\big)+\lambda \ul_x(s) \Big)\rho_r dxds
\end{align*}
and, therefore,
\begin{align}\label{eq:int_j}
	\int_\R \ul(t) \rho_r dx&\le \int_\R \ul_0 \rho_r dx+\frac{\|\rho'\|_{L^\infty(\R)}}{r}\int_0^t \|\ul(s)\|_{L^2(\R)}^2ds+\frac{\|\rho''\|_{L^\infty(\R)}}{r^2}\int_0^t\|\ul(s)\|_{L^1(\R)}ds \\
	&\quad+2\int_0^t \big(1-\psil(s)\big) \int_\R \Big(\lambda^2\big(K_{\lambda}*\ul(s)- \ul(s)\big)+ \lambda \ul_x(s)\Big)\rho_r dx ds. \notag
\end{align}
We have to obtain an estimate on the last term, uniformly on $\lambda$. The same argument as in \cite[Theorem 2.5]{Pozo:2014} shows that
\begin{equation*}
	\int_\R \Big(\lambda^2\big(K_{\lambda}*\ul(s)- \ul(s)\big)+ \lambda \ul_x(s)\Big)\rho_rdx \lesssim \|u_0\|_{L^1(\rr)}\|\rho_r''\|_{L^\infty(\rr)}
	\lesssim 
	\frac{\|u_0\|_{L^1(\rr)} \|\rho''\|_{L^\infty(\rr)}}{r^2}.
\end{equation*}
So, plugging this into \eqref{eq:int_j} and using Lemma \ref{lemma:decay}, we get:
\begin{equation*}
	\int_\R \ul(t,x)\rho_r (x)dx \le \int_\R  u_{0}(x)\rho_{\lambda r}(x) dx+C\left(\frac{\sqrt{t}}{r}+\frac{t}{r^2}\right)
\end{equation*}
where constant $C$ depends only on $\|u_0\|_{L^1(\rr)}, \|u_0\|_{L^\infty(\rr)}$ and $\|\rho\|_{W^{2,\infty}(\R)}$, which are both bounded. For $\lambda\ge 1$, since  $(-r,r)\subset {\rm{supp}} (\rho _{\lambda r})$, we finally conclude
\begin{equation*}
	\int_{|x|>2r} |\ul(t,x)| dx \le \int_{|x|>r} \left|u_0(x)\right| dx+C\left(\frac{\sqrt{t}}{r}+\frac{t}{r^2}\right).
\end{equation*}
The proof is now finished.
\end{proof}

\subsection{Compactness in $L^1_{loc}((0,\infty),L^1(\R))$.}
We now proceed as in \cite{Ignat:2015}. For any $0<t_1<t_2<\infty$, Lemma \ref{lemma:decay} and Lemma \ref{lemma:h-1} give us the existence of a function $\bar u\in C([t_1,t_2],H^{-\eps}(\R))$, with $\eps\in(0,1)$, such that, up to a subsequence that we do not relabel,
\begin{equation*}
	\ul\to\bar u \quad\mbox{ in }\quad C([t_1,t_2],H^{-\eps}(\R)).
\end{equation*}
Moreover, for every $t>0$ and $p\in[1,\infty)$, we have that
\begin{equation*}
	\ul(t) \rightharpoonup \bar u(t) \quad\mbox{ in }\quad L^p_{loc}(\R).
\end{equation*}
In fact the bound from Lemma \ref{lemma:decay} transfers to $\bar u$ and, hence, 
\begin{equation}\label{eq:ubardecay}
	\|\bar u(t)\|_{L^p(\R)} \le \liminf_{\lambda\to\infty} \|\ul(t)\|_{L^p(\R)} \le C t^{-\frac12(1-\frac1p)},\quad\forall t>0.
\end{equation}
Now, from Lemma \ref{lemma:decay}, Lemma \ref{lemma:h-1} and Lemma \ref{lemma:tails} we deduce that for any $0<t_1<t_2<\infty$ there exists a positive constant $C=C(t_1,\|u_0\|_{L^1(\R)},\|u_0\|_{L^\infty(\R)})$ such that
\begin{equation*}
	\|\ul(t)\|_{L^2((t_1,t_2)\times \R)} \le C.
\end{equation*}
\begin{equation*}
	\lambda^2 \int_{t_1}^{t_2} \int_\R \int_\R J_\lambda(x-y)(\ul(t,x)-\ul(t,y))^2dxdydt \le C,
\end{equation*}
and
\begin{equation*}
	\|\ul_t\|_{L^2((t_1,t_2),H^{-1}(\R))}\le C.
\end{equation*}
Thus, we can apply Theorem \ref{rossi-time} to the family $\{\ul\}_{\lambda>0}$ and to the time interval $(t_1,t_2)$. We obtain that there exists a function $\omega \in L^2((t_1,t_2),H^1(\R))$ such that, up to a subsequence, 
\begin{equation*}
	\ul\to \omega \quad\mbox{ in }\quad L^2((t_1,t_2),L^2_{loc}(\R)).
\end{equation*}
In fact, due to the previous analysis, we know that $\omega=\bar u$. Therefore, we conclude that $\bar u\in L^2((t_1,t_2),H^1(\R))\cap L^1_{loc}((t_1,t_2)\times \R)$ and that
\begin{equation*}
	\ul\to \bar u \quad\mbox{ in }\quad L^1_{loc}((0,\infty)\times \R).
\end{equation*}
Finally, the strong convergence in $L^1_{loc}((0,\infty),L^1(\R))$ is an immediate consequence of Lemma \ref{lemma:tails}; indeed a standard diagonal argument reduces the compactness in $L^1_{loc}((0,\infty),L^1(\R))$ to prove that
\begin{equation*}
	\int_{t_1}^{t_2}\|\ul(t)\|_{L^1(|x|>r)}dt\to 0\quad\mbox{ as }\quad r\to\infty,\quad\mbox{ uniformly in }\lambda\ge1,
\end{equation*}
a property that follows from Lemma \ref{eq:tails}.

\subsection{Proof of Theorem \ref{teo:asymptotic}}
We can now prove the last main result of this paper. We proceed in several steps, as in \cite{Ignat:2015}.

\noindent\textit{Step I. Passing to the limit. }
Let us multiply equation \eqref{eq:ulambda} by a test function $\varphi\in C_c^\infty(\R)$ and integrate it on $(\tau,t)\times\R$, where $0<\tau<t$. We have:
\begin{align*}
	\int_\R \ul(t,x)&\varphi(x)dx - \int_\R \ul(\tau,x)\varphi(x)dx \\
	&=2 \int_\tau^t \psil(s) \int_\R \ul_{xx}(s,x) \varphi(x) dxds + 2\int_\tau^t \psil(s) \int_\R \ul(s,x)\ul_x(s,x) \varphi(x) dxds \\
	&\quad +2 \int_\tau^t \left(1-\psil(t)\right) \int_\R \left(\lambda^2 K_\lambda*\ul(s,x) -\lambda^2\ul(s,x) +\lambda \ul_x(s,x)\right) \varphi(x) dxds \\
	&=2 \int_\tau^t \psil(s) \int_\R \ul(s,x) \varphi''(x) dxds - \int_\tau^t \psil(s) \int_\R \big(\ul(s,x)\big)^2 \varphi'(x) dxds \\
	&\quad +2 \int_\tau^t \left(1-\psil(t)\right) \int_\R \ul(s,x) \left(\lambda^2 \tilde K_\lambda*\varphi(x) -\lambda^2\varphi(x) -\lambda \varphi'
	(x)\right) dxds\\
	&=I_1+I_2+I_3,
\end{align*}
where $\tilde K_\lambda(z) = K_\lambda(-z).$ We know that $\ul(t) \rightharpoonup \bar u(t)$ in $L^p_{loc}(\R)$, $1\leq p <\infty$, for all $t>0$. Thus,
\begin{equation*}
	\int_\R \ul(t,x)\varphi(x)dx - \int_\R \ul(\tau,x)\varphi(x)dx \to \int_\R \bar u(t,x)\varphi(x)dx - \int_\R \bar u(\tau,x)\varphi(x)dx.
\end{equation*}

Let us recall that $\psi^\dt \rightharpoonup\frac12$  weakly-* in $L^\infty_{loc}((0,\infty)\times \rr)$ (see \cite{Crandall:1980}, for example). Using that $\ul \rightarrow \bar u$ in $L^1_{loc}((0,\infty),L^1(\rr))$ we obtain that 
\begin{equation*}
	I_1 \to \int_\tau^t \int_\R \bar u (s,x) \varphi''(x) dxds, \ \text{as}\  \lambda\rightarrow\infty.
\end{equation*}

The strong convergence of $u^\lambda$ towards $\bar u$ in $L^2_{loc}((0,\infty)\times\rr)$ implies that $(u^\lambda)^2\rightarrow \bar u^2$ in $L^1_{loc}((0,\infty)\times\rr)$ and then 
\begin{equation*}
  I_2\rightarrow  \frac12 \int_\tau^t \int_\R \big(\bar u(s,x)\big)^2 \varphi'(x) dxds, \ \text{as}\  \lambda\rightarrow\infty.
\end{equation*}

Using the strong convergence of  $u^\lambda$ in $L^1_{loc}((0,\infty)\times \rr)$ and that, for any smooth function $\varphi$, we have the following estimate 
\begin{equation*}
	\|\lambda^2 \tilde K_\lambda*\varphi(x) -\lambda^2\varphi(x) -\lambda \varphi'(x) - \varphi'' \|_{L^\infty(\rr)}\lesssim \frac {\|\varphi ^{'''}\|_{L^\infty(\rr)}}{\lambda},
\end{equation*}
we obtain that
\begin{equation*}
	I_3\rightarrow \int_\tau^t \int_\R \bar u (s,x) \varphi''(x) dxds, \ \text{as}\  \lambda\rightarrow\infty.
\end{equation*}

In conclusion, all the above convergences show that $\bar u$ satisfies
\begin{align}
	\int_\R \bar u(t,x)&\varphi(x)dx - \int_\R \bar u(\tau,x)\varphi(x)dx \\
	&= 2 \int_\tau^t \int_\R \bar u (s,x) \varphi''(x) dxds - \frac12 \int_\tau^t \int_\R \big(\bar u(s,x)\big)^2 \varphi'(x) dxds. \notag
\end{align}
Note that this implies that $\bar u$ is a weak solution of $u_t=uu_x+2u_{xx}$.

\noindent\textit{Step II. Identification of the initial data. }
Let us choose $\tau=0$ in the previous step. Then, for any $\varphi\in C_b^2(\R)$, we have
\begin{align*}
	\bigg|\int_\R \ul(t,x)&\varphi(x)dx - \int_\R \ul(0,x)\varphi(x)dx\bigg| \\
	&\le 2 \bigg| \int_0^t \psil(s) \int_\R \ul(s,x) \varphi''(x) dxds\bigg| + \bigg|\int_0^t \psil(s) \int_\R \big(\ul(s,x)\big)^2 \varphi'(x) dxds\bigg| \\
	&\quad +2 \bigg|\int_0^t \left(1-\psil(t)\right) \int_\R \ul(s,x) \left(\lambda^2 \tilde K_\lambda*\varphi(x) -\lambda^2\varphi(x) -\lambda \varphi'(x)\right) dxds\bigg| \\
	&\lesssim \|\varphi\|_{W^{2,\infty}(\R)} \int_0^t \|\ul(s)\|_{L^1(\R)} ds + \|\varphi\|_{W^{1,\infty}(\R)} \int_0^t \| \ul(s)\|_{L^2(\R)}^2 ds ,
\end{align*}
where we used that (see \cite{Ignat:2015} for similar estimates) there exists a positive constant $C$ such that, for any $\lambda>0$ and $\varphi \in W^{2,\infty}(\rr)$, the following holds
\begin{equation*}
	\|\lambda^2 (K_\lambda \ast \varphi -\varphi )- \lambda \varphi ' \|_{L^\infty(\rr)}\leq C \|\varphi''\|_{L^\infty(\rr)}.
\end{equation*}
Using Lemma \ref{lemma:uldecay}, we deduce that
\begin{align*}
	\bigg|\int_\R \ul(t,x)\varphi(x)dx - \int_\R \ul(0,x)\varphi(x)dx\bigg| \lesssim \|\varphi\|_{W^{2,\infty}(\R)} (t+ t^{\frac12}).
\end{align*}
Letting $\lambda\to\infty$, we obtain that, for any $\varphi\in C_b^2(\R)$ 
\begin{equation*}
	\bigg|\int_\R \bar u(t,x)\varphi(x)dx - M\varphi(0) \bigg| \lesssim \|\varphi\|_{W^{2,\infty}(\R)} (t+ t^{\frac12}),
\end{equation*}
where $M=\int_\R u_0dx$ is the mass of the initial data $u_0$. This implies that, for any $\varphi\in C_b^2(\rr)$, we have
\begin{equation*}
	\lim_{t\rightarrow 0} \int_\R \bar u(t,x)\varphi(x)dx =M\varphi(0).
\end{equation*}

Standard density arguments and the tail control in Lemma \ref{lemma:tails}
show that the above limit holds for any bounded continuous function $\varphi$. We can conclude that $\bar u(t)$ goes to $M\delta_0$ as $t\to0$ in the sense of bounded measures. Therefore, we have proved that $\bar u\in L^\infty_{loc}((0,\infty),L^1(\R))\cap L^2_{loc}((0,\infty),H^1(\R))$ is the unique solution $u_M$ of the following viscous Burgers equation:
\begin{equation*}
	\begin{cases}
		u_t=uu_x+2u_{xx},&(t,x)\in(0,\infty)\times\R, \\
		u(0,x)=M\delta_0,& x\in\R.
	\end{cases}
\end{equation*}
Since this equation has a unique solution, then the whole sequence $\ul$ converges to $\bar u$, not only a subsequence.

\noindent\textit{Step III. Asymptotic behavior. }
The strong convergence of $u^\lambda$ toward $u_M$ in $L^1_{loc}((0,\infty),L^1(\R))$ also guarantees that for a.e. $t>0$ we have
\begin{equation}\label{eq:convl1}
	\|\ul(t)- u_M(t)\|_{L^1(\R)}\to 0\quad\mbox{ as }\quad\lambda\to\infty.
\end{equation}
In particular, there exists some $t_0>0$ such that
\begin{equation*}
	\|\ul(t_0)-u_M(t_0)\|_{L^1(\R)}\to 0\quad\mbox{ as }\quad\lambda\to\infty.
\end{equation*}
If we set $t=\lambda^2 t_0$ and use the self-similar form of $u_M$, we obtain
\begin{equation*}
	\lim_{t\to\infty }\| u (t)-u_M(t) \|_{L^1(\R)} =0.
\end{equation*}
This is exactly the case $p=1$ of \eqref{eq:convrates}. The general case $p\in[1,\infty)$ follows immediately, since
\begin{align*}
	\|u(t)-u_M\|_{L^p(\R)} &\lesssim \|u(t)-u_M\|_{L^1(\R)}^{\frac{1}{2p-1}}\left(\|u(t)\|_{L^{2p}(\R)}+\|u_M(t)\|_{L^{2p}(\R)}\right)^{1-\frac{1}{2p-1}} \\
	&\lesssim \|u(t)-u_M\|_{L^1(\R)}^{\frac{1}{2p-1}}\left(t^{-\frac12(1-\frac{1}{2p})}\right)^{1-\frac{1}{2p-1}} = o(t^{-\frac12(1-\frac{1}{p})}).
\end{align*}
The proof of Theorem \ref{teo:asymptotic} is now complete.


\section{Numerical example}\label{sec4}
To conclude this paper, we show some numerical simulations that illustrate the analytical results that we have proved in Section \ref{sec2} and Section \ref{sec3}. Let us remark that the use of splitting methods facilitates the numerical resolution of the different terms of an equation independently. In our case, one can deal with the nonlocal term more efficiently while using appropriate schemes for the nonlinear phenomena arising due to the flux term.

The solution to the augmented Burgers equation, defined as in \eqref{eq:abe}, is immediately dominated by the viscous effects of the terms in the right-hand side. In order to emphasize the effects of the nonlinear flux, which are predominant in the sonic-boom phenomenon, let us take some non-unitary values of the parameters in equation \eqref{eq:sonic} to perform the simulations:
\begin{equation*}
		u_t-\left(\frac{u^2}{2}\right)_x=\frac{1}{\Gamma}u_{xx}+c_\nu(K*u-u+u_x).
\end{equation*}
In particular, let us  consider $\Gamma=100$ and $c_\nu=0.02$ in our experiments. Note that the analytical results obtained in the previous sections, regarding convergence order and decay rates, are still satisfied.

For some grid parameters $\dx,\dt>0$, let us denote by $u^n_j$ the approximate solution to $u(n\dt,j\dx)$, with $n\in\mathbb{N}\cup\{0\}$ and $j\in\Z$. On the one hand, in order to solve equation \eqref{eq:burgers}, we use Engquist-Osher numerical flux and centered finite differences for the diffusion:
\begin{equation}\label{eq:scheme}
	u^{n+\frac12}_j=u_n^j-\frac{\Delta t}{\Delta x}\left(g(u^n_{j},u^n_{j+1})-g(u^n_{j-1},u^n_{j})\right) + \frac{1}{\Gamma}\frac{\dt}{\dx^2}\left(u^n_{j-1}-2u^n_{j}+u^n_{j+1}\right),
\end{equation}
where 
\begin{equation*}
	g(a,b)=-\frac{a(a-|a|)}{4}-\frac{b(b+|b|)}{4},\quad \forall a,b\in\rr.
\end{equation*}
This choice for the flux is in accordance with the large-time performance requirements listed in \cite{Pozo:2014,Pozo:2015}. 

On the other hand, regarding the equation corresponding to the nonlocal term \eqref{eq:augmented}, note that it can be rewritten in the following way:
\begin{equation*}
	\begin{cases}
		v_t+v_{tx}=c_\nu v_{xx},&(t,x)\in(0,\infty)\times\R, \\
		v(0,x)=v_0(x),&x\in\R.
	\end{cases}
\end{equation*}
This form allows us to use centered finite differences along with Crank-Nicolson (e.g. \cite{Iserles:1996}):
\begin{flalign*}
	\frac{u^{n+1}_j-u^{n+\frac12}_j}{\dt} &+ \left(\frac{u^{n+1}_{j+1}-u^{n+\frac12}_{j+1}}{\dx\dt} -\frac{u^{n+1}_{j-1}-u^{n+\frac12}_{j-1}}{\dx\dt}\right) \\ &=\frac{c_\nu}{2}\left( \frac{u^{n+1}_{j-1}-2u^{n+1}_{j}+u^{n+1}_{j+1}}{\dx^2} + \frac{u^{n+\frac12}_{j-1}-2u^{n+\frac12}_{j}+u^{n+\frac12}_{j+1}}{\dx^2}\right) ,\,\,\, j\in \mathbb{Z}, n>0.
\end{flalign*}
This alternative has already been used in \cite{Cleveland:1995,Rallabhandi:2011a,Rallabhandi:2011b}. Any tridiagonal matrix solver can efficiently solve it, avoiding the computational cost of the integration of the convolution in \eqref{eq:augmented}. Additionally, simulations show that, combined with \eqref{eq:burgers}, the long-time dynamics of \eqref{eq:abe} are preserved, while the solution keeps the scale given by \eqref{eq:ul}.

We take a mesh size $\dx=0.1$ and a time step small enough to satisfy the stability condition (e.g., see \cite{Castro:2010}) for scheme \eqref{eq:scheme}, given by:
\begin{equation*}
	\left(\max_{j\in\Z} \left|u^0_j\right|\right)^2 \frac{\dt}{\dx}+\frac{2}{\Gamma}\frac{\dt}{\dx^2} \le 1.
\end{equation*}
In order to avoid boundary issues, we choose compactly supported initial data and a large enough spatial domain.

First of all, we focus on the convergence order of the splitting method. Since we do not know the exact solution of \eqref{eq:abe}, we determine the order by comparing two numerical solutions for the same initial data but a different time step. That is, we compute $\|u_1(T)-u_2(T)\|_{L^2(\R)}$, where $u_1$ and $u_2$ are obtained from the same initial data using $\dt$ and $\dt/2$ time-steps, respectively. As a matter of fact, we show the convergence order for two different initial data, plotted in Figure \ref{fig:fig4}. 

\begin{figure}[ht!]
	\centering
	\includegraphics[width=0.8\linewidth]{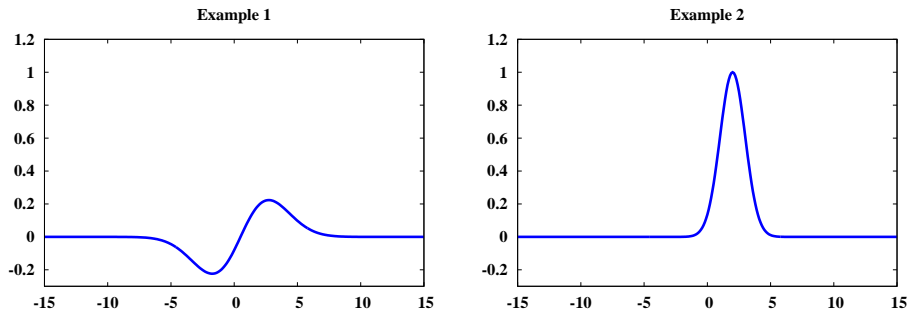}
	\caption{Initial data used in the first experiment, in relation to the accuracy order.}
	\label{fig:fig4}
\end{figure}

As it can be observed in Figure \ref{fig:fig1}, the slopes for the numerical solutions at $T=10$ corresponding to those initial data are consistent with the theoretical results of Section \ref{sec2}. A line of slope one has been added as a reference for the convergence order.

\begin{figure}[ht!]
	\centering
	\includegraphics[width=0.7\linewidth]{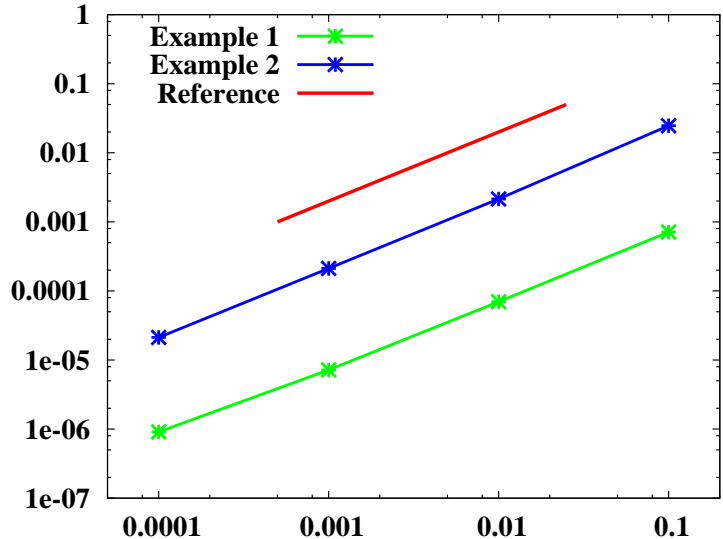}
	\caption{Accuracy order for the initial data shown in Figure \ref{fig:fig4} using several time-step sizes. The plot shows $\|u_1(10)-u_2(10)\|_{L^2(\R)}$, where $u_1$ and $u_2$ have been computed using $\dt$ and $\dt/2$ respectively.}
	\label{fig:fig1}
\end{figure}

Furthermore, we also show the convergence rate of the numerical solutions towards the asymptotic profile of the augmented Burgers equation, given in Theorem \ref{teo:asymptotic}. Let us remark that $u_M$ is now the asymptotic profile of the viscous Burgers equation with $1/\Gamma+c_\nu$ viscosity coefficient instead of $2$ (see \cite{Pozo:2014} for the complete expression for $u_M$). We compare the splitting method described above with the discretization proposed in \cite{Pozo:2014}, where a rectangle method is used to approximate the convolution in \eqref{eq:abe}. In Figure \ref{fig:fig2} we plot the solution at time $T=12000$ for the given initial data, as well as the corresponding asymptotic profile $u_M$. Then, in Figure \ref{fig:fig3} we can observe that the convergence rates meet the ones defined in \eqref{eq:convrates}, preserving the large-time behavior of the solution of equation \eqref{eq:abe}.

\begin{figure}[ht!]
	\centering
	\includegraphics[width=0.95\linewidth]{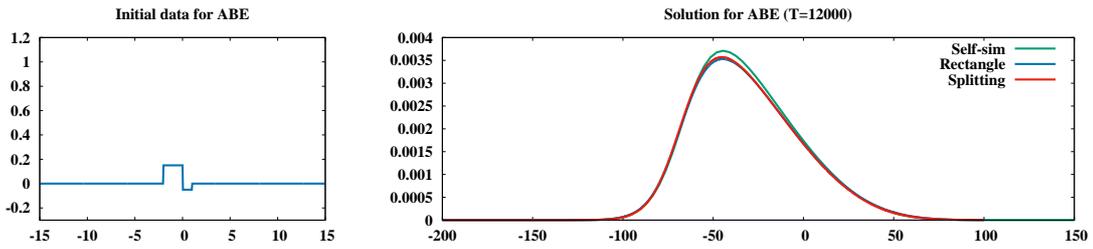}
	\caption{Initial data and solution for ABE with $\Gamma=100$ and $c_\nu=0.02$ at $T=12000$. We compare the splitting method (blue) with the one from \cite{Pozo:2014} discretized explicitly in time (red). The self-similar asymptotic profile (green) is given as reference too.}
	\label{fig:fig2}
\end{figure}

\begin{figure}[ht!]
	\centering
	\includegraphics[width=0.95\linewidth]{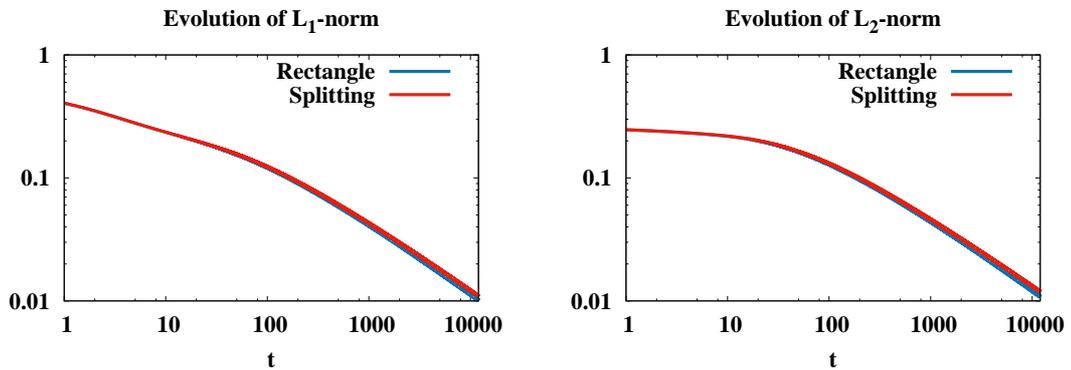}
	\caption{Evolution of the $L^1$ and $L^2$ norms of the difference between the asymptotic profile and the solution, multiplied by their corresponding rate, as in \eqref{eq:convrates}. We compare the splitting method (blue) and the one from \cite{Pozo:2014} discretized explicitly in time (red).}
	\label{fig:fig3}
\end{figure}

\section*{Aknowledgements}
L.~I.~Ignat was partially supported by a grant of the Romanian National Authority for Scientific Research and Innovation, CNCS-UEFISCDI, project number  PN-II-RU-TE-2014-4-0007, Grant MTM2014-52347, MICINN, Spain and FA9550-15-1-0027 of AFOSR. A.~Pozo was granted by the Basque Government, reference PRE\_2013\_2\_150, and partially supported by ERCEA under Grant 246775 NUMERIWAVES, by the Basque Government through the BERC~2014-2017 program and by Spanish MINECO:~BCAM~Severo~Ochoa excellence accreditation SEV-2013-0323.


\bibliographystyle{amsplain}
\bibliography{library}

\end{document}